\newtheoremstyle{mytheoremstyle1} 
    {\topsep}                    
    {\topsep}                    
    {\itshape}                   
    {}                           
    {\bf}                   
    {}                          
    {.7em}                       
    {}  
\newtheoremstyle{mytheoremstyle2} 
    {\topsep}                    
    {\topsep}                    
    {}                   		
    {}                           
    {}                   
    {}                          
    {.7em}                       
    {}  
\theoremstyle{mytheoremstyle1}
\newtheorem{thm}{Theorem}[section]
\newtheorem{prop}[thm]{Proposition}
\theoremstyle{mytheoremstyle2}
\theoremstyle{remark}
\newtheorem{rem}[thm]{Remark}
\makeatletter \@addtoreset{equation}{section} \makeatother
\def\R{\mathbb{R}}
\def\ZZ{\mathbb{Z}}
\def\NN{\mathbb{N}}
\newcommand\dif{\mathop{}\!\mathrm{d}}
\def\O{\mathcal{O}}
\DeclareMathOperator{\RE}{Re}
\DeclareMathOperator{\D}{D}
\DeclareMathOperator{\ii}{i}
\DeclareMathOperator{\per}{per}
\DeclareMathOperator{\loc}{loc}
\DeclareMathOperator{\TT}{\text{T}}
\def\eps{\varepsilon}
\def\i{\text{i}}
\begin{document}
	\title{Transverse dynamics of two-dimensional traveling\\ periodic gravity--capillary water waves
		\footnotesize{\author{Mariana Haragus, Tien Truong \& Erik Wahlén}
	}}
	\date{}
	\newcommand{\Addresses}{{
			\bigskip
			\footnotesize
			M.~Haragus, \textsc{FEMTO-ST institute, Univ. Bourgogne Franche-Comt\'e, France}\par\nopagebreak
			\textit{E-mail address:} \texttt{mariana.haragus@femto-st.fr}
			
			\medskip
			
			T.~Truong, \textsc{Centre for Mathematical Sciences, Lund University, Sweden}
			\par\nopagebreak
			\textit{E-mail address:} \texttt{tien.truong@math.lu.se}
			
			\medskip
			
			E.~Wahl\'en, \textsc{Centre for Mathematical Sciences, Lund University, Sweden}\par\nopagebreak
			\textit{E-mail address:} \texttt{erik.wahlen@math.lu.se}

	}}
	\maketitle
	
	\begin{abstract}
		We study the transverse dynamics of two-dimensional traveling periodic waves for the gravity--capillary water-wave problem. The governing equations are the Euler equations for the irrotational flow of an inviscid fluid layer with free surface under the forces of gravity and surface tension. We focus on two open sets of dimensionless parameters $(\alpha,\beta)$, where $\alpha$ and $\beta$ are the inverse square of the Froude number and the Weber number, respectively. For each arbitrary but fixed pair  $(\alpha,\beta)$ in one of these sets, two-dimensional traveling periodic waves bifurcate from the trivial constant flow. In one open set we find a one-parameter family of periodic waves, whereas in the other open set we find two geometrically distinct one-parameter families of periodic waves. Starting from a transverse spatial dynamics formulation of the governing equations, we investigate the transverse linear instability of these periodic waves and the induced dimension-breaking bifurcation. The two results share a common analysis of the purely imaginary spectrum of the linearization at a periodic wave. We apply a simple general criterion for the transverse linear instability problem and a Lyapunov center theorem for the dimension-breaking bifurcation. For parameters  $(\alpha,\beta)$ in the open set where there is only one family of periodic waves, we prove that these waves are linearly transversely unstable. For the other open set, we show that the waves with larger wavenumber are transversely linearly unstable. We also identify an open subset of parameters for which both families of periodic waves are tranversely linearly unstable. For each of these transversely linearly unstable periodic waves, a dimension-breaking bifurcation occurs in which three-dimensional doubly periodic waves bifurcate from the two-dimensional periodic wave.
	\end{abstract}
	
	\noindent
	{\bf Keywords:} Gravity--capillary water waves, periodic waves, transverse linear stability, dimen- sion-breaking bifurcation.
	
	\section{Introduction}
	\label{sect-hydro}
	
	We consider a three-dimensional inviscid fluid with constant density $\rho$ occupying a region
	\[D_{\eta}=\{(X, Y, z) \in \R^3 \, : \, 0 < Y < h+\eta(X, z, t)\},\]
	where $(X,Y,z)$ are Cartesian coordinates, $h$ is the mean fluid depth, and $\eta > -h$ is the unknown free surface of the fluid depending on the horizontal spatial variables $X,z$ and the time variable~$t$. The fluid is under the influence of the gravitational force with acceleration constant $g$ and surface tension with coefficient $T$. We assume that the flow is irrotational and denote by $\phi$ an Eulerian velocity potential. Choosing a coordinate frame moving from left to right along the $X$-axis with constant velocity $c>0$, the fluid motion is described by Laplace's equation
	\begin{equation}
	\label{eq-hydro-nondim}
	\phi_{XX}+\phi_{YY}+\phi_{zz} = 0 \quad \text{for} \quad 0 < Y < 1+\eta,
	\end{equation}
	with boundary conditions
	\begin{equation}
	\label{eq-hydro-bc-nondim}
	\begin{aligned}
	&\phi_Y = 0& \quad &\text{on $Y=0$},&\\
	&\phi_Y = \eta_t - \eta_X + \eta_X \phi_X + \eta_z \phi_z& \quad &\text{on $Y=1+\eta$},&\\
	&\phi_t -  \phi_X + \frac{1}{2}\left(\phi_X^2+\phi_Y^2+\phi_z^2\right) + \alpha\eta - \beta \mathcal K = 0& &\text{on $Y=1+\eta$}.&
	\end{aligned}
	\end{equation}
	Here, we have used dimensionless variables by taking the characteristic length scale $h$ and characteristic time scale $h/c$. The dimensionless parameters
	\[\quad \alpha = \frac{gh}{c^2}  \quad \text{and} \quad  \beta = \frac{T}{\rho hc^2}\]
	are the inverse square of the Froude number and the Weber number, respectively, and the quantity $\mathcal K$ is twice the mean curvature of the free surface $\eta$, given by
	\[\mathcal K = {\left[\frac{\eta_X}{\sqrt{1+\eta_X^2+\eta_z^2}}\right]}_X + \left[\frac{\eta_z}{\sqrt{1+\eta_X^2+\eta_z^2}}\right]_z.\]
	The set of equations \eqref{eq-hydro-nondim}--\eqref{eq-hydro-bc-nondim} are the Euler equations for gravity--capillary waves on water of finite depth. The case $\beta=0$, that we do not consider in this work, corresponds to gravity water waves. 
	
	We are interested in the transverse dynamics of two-dimensional traveling periodic waves. In the above formulation these are steady solutions which are periodic in $X$ and do not depend on the second horizontal coordinate $z$ and on the time $t$. Their existence is well-recorded in the literature; e.g., see \cite{DI03,Groves04, Groves07, HI} and the references therein. Many of these results are obtained using methods from bifurcation theory. Bifurcations of two-dimensional periodic waves are determined by the positive roots of the linear dispersion relation
	\begin{equation}\label{eq-lindisrel}
	\mathcal D(k) \coloneqq (\alpha + \beta k^2) \sinh|k| - |k|\cosh(k)=0,
	\end{equation}
	obtained by looking for nontrivial solutions of the form $(\eta(X),\phi(X,Y))=(\eta_{k},\phi_k(Y)) \exp(\i kX)$ to the steady system  \eqref{eq-hydro-nondim}--\eqref{eq-hydro-bc-nondim} linearized at $(\eta_0(X), \phi_0(X,Y))=(0,0)$.
	Associated to any positive root $k$ of the linear dispersion relation, one finds a one-parameter family of periodic waves $\{(\eta_\varepsilon(X),\phi_\varepsilon(X,Y))\}_{\varepsilon\in(-\varepsilon_0,\varepsilon_0)}$ with wavenumbers close to $k$, for sufficiently small $\varepsilon_0>0$. These periodic waves bifurcate from the trivial solution $(\eta_0,\phi_0)=(0,0)$.
	
	Depending on the values of the two parameters $\alpha$ and $\beta$, the linear dispersion relation \eqref{eq-lindisrel} possesses positive roots in the following cases:
	\begin{enumerate}
		\item one positive simple root $k_*>0$  if  $\alpha \in (0, 1)$ and $\beta > 0$; we refer to this set of parameters as Region~I.
		\item two positive simple roots $0<k_{*,1}<k_{*,2}$ if $\alpha > 1$ and $0<\beta<\beta(\alpha)$, where $(\alpha,\beta(\alpha))$ belongs to the curve $\Gamma$ with parametric equations
		\begin{equation}\label{eq-Gamma}
		\alpha = \frac{s^2}{2 \sinh^2(s)} + \frac{s}{2 \, \tanh(s)},\quad
		\beta = -\frac{1}{2\sinh^2(s)} + \frac{1}{2s \, \tanh(s)} , \quad s \in (0, \infty) ;
		\end{equation}
		we refer to this set of parameters as Region~II;
		\item  one positive simple root $k_*>0$  if  $\alpha =1$ and $\beta<1/3$;
		\item one positive double root $k_*$ if $(\alpha,\beta)$ belongs to the curve $\Gamma$ given in \eqref{eq-Gamma}.
	\end{enumerate}
	The linear dispersion relation being even in $k$, together with any positive root $k$ we also find the negative root $-k$. We illustrate these properties in the left panel of Figure~\ref{fig-lindis}.  
	\begin{figure}[H]
		\begin{center}
			\includegraphics[scale=0.8]{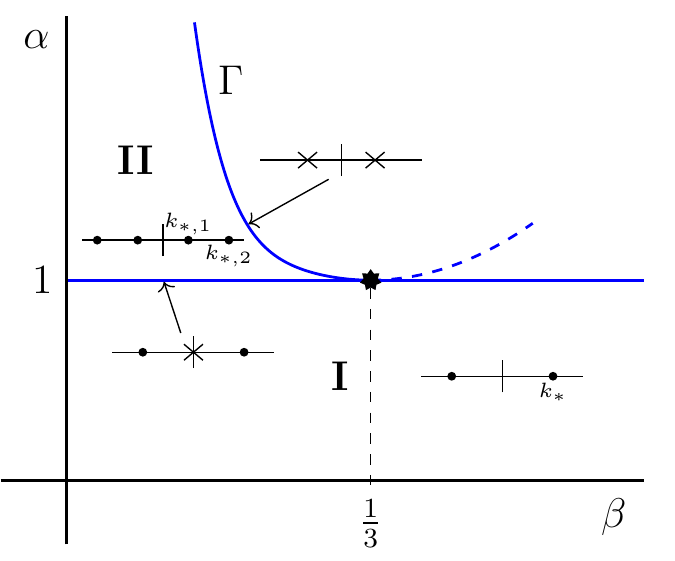}
			\includegraphics[scale=0.8]{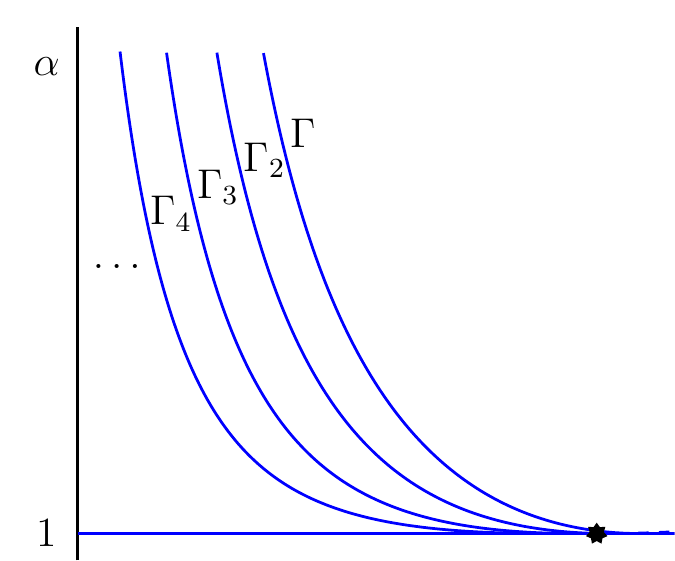}
			\captionsetup{width=.8\linewidth}
			\caption{Left: In the $(\beta,\alpha)$-plane, sketch of the nonzero roots of the linear dispersion relation~\eqref{eq-lindisrel}. We use dots to indicate simple roots and crosses to indicate double roots.
				Right: In Region~II, plot of the curves $\Gamma_m$ for $m=2,3,4$ which are excluded from our analysis.}
			\label{fig-lindis}
		\end{center}
	\end{figure}
	Here, we focus on the periodic waves which bifurcate in the two open parameter regions I and II.
	For simplicity, in Region~II we assume that $k_{*,1}$ and $k_{*,2}$ satisfy the non-resonance condition $k_{*,2}/k_{*,1} \notin \NN$. This assumption means that $(\alpha,\beta)$ does not belong to any of the curves $\Gamma_m$ for $m \in \NN$, $m \ge 2$, with parametric equations
	\[
	\begin{array}{l}
	\alpha = \displaystyle{-\frac{m^2 s}{(1-m^2)\tanh(s)}+\frac{ms}{(1-m^2)\tanh( ms)}}\\[2ex]
	\beta = \displaystyle{\frac{1}{(1-m^2)s \tanh(s)}- \frac{m}{(1-m^2)s \tanh(ms)}}
	\end{array}, \quad s \in (0, \infty);\]
	see the right panel in Figure~\ref{fig-lindis}. Then, for any fixed $(\alpha,\beta)$ in Region~I, there is a one-parameter family of two-dimensional periodic waves $\{(\eta_\varepsilon(X),\phi_\varepsilon(X,Y))\}_{\varepsilon\in(-\varepsilon_0,\varepsilon_0)}$ with wavenumbers close to $k_*$, whereas for $(\alpha,\beta)$ in Region~II, there are two geometrically distinct families of periodic waves 
	\[\{(\eta_{\eps,1}(X), \phi_{\eps,1}(X, Y))\}_{\eps \in (-\eps_0, \eps_0)}\, \, \text{and} \, \, \{(\eta_{\eps,2}(X), \phi_{\eps, 2}(X, Y))\}_{\eps \in (-\eps_0, \eps_0)}\] 
	with wavenumbers close to $k_{*,1}$ and $k_{*,2}$, respectively.
	
	The purpose of our transverse dynamics analysis is twofold: to identify the periodic waves in regions I and II which are transversely linearly unstable and to discuss the induced dimension-breaking bifurcations. Roughly speaking, a two-dimensional wave is transversely linearly unstable if the Euler equations  \eqref{eq-hydro-nondim}--\eqref{eq-hydro-bc-nondim} linearized at the wave possess solutions which are bounded in the  horizontal coordinates $(X,z)$ and exponentially growing in time $t$. The dimension-breaking bifurcation is the bifurcation of three-dimensional solutions emerging from the two-dimensional transversely unstable wave. Typically, these three-dimensional solutions are periodic in the transverse horizontal coordinate $z$; see Figure~\ref{fig-2D-sol} for an illustration in the case of a two-dimensional periodic wave. Though of different type, these two questions share a common spectral analysis of the linear operator at the two-dimensional wave. This is the key, and most challenging, part of our analysis.
	\begin{figure}[H]
		\begin{center}
			\includegraphics[scale=0.6]{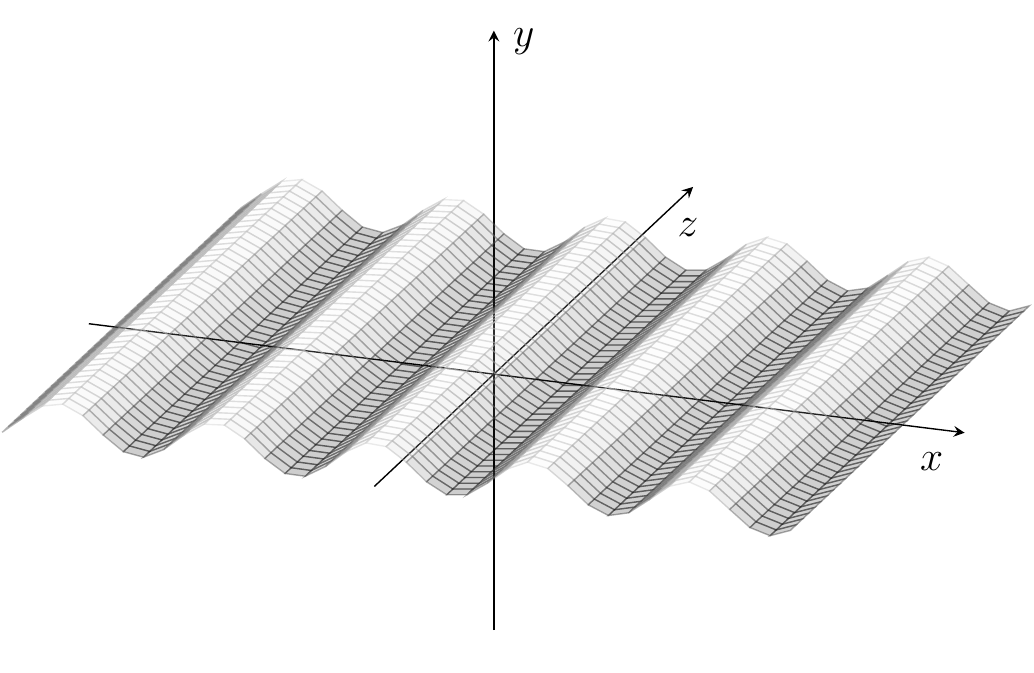}
			\includegraphics[scale=0.6]{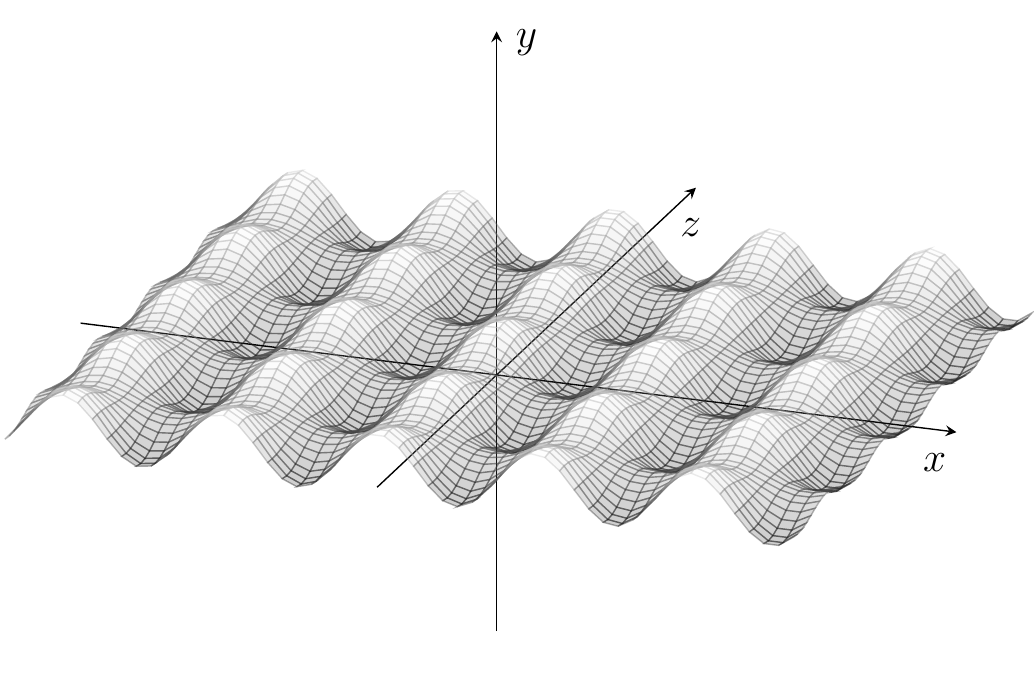} 
			\captionsetup{width=.8\linewidth}
			\caption{Illustration of a dimension-breaking bifurcation. Left: plot of a two-dimensional periodic wave. Right: plot of a bifurcating three-dimensional doubly periodic wave.}
			\label{fig-2D-sol}
		\end{center}        
	\end{figure}
	
	The transverse stability of periodic waves was mostly studied for simpler model equations obtained from the Euler equations \eqref{eq-hydro-nondim}--\eqref{eq-hydro-bc-nondim} in different parameter regimes: the Kadomtsev--Petviashvili-I equation for the regime of large surface tension ($\alpha\sim1,\ \beta>1/3$) was considered in \cite{Haragus2,JZ,HSS}, the Davey--Stewartson system for the regime of weak surface tension ($(\alpha,\beta)$ close to the curve $\Gamma$) in \cite{Godey}, and a fifth order KP equation for the regime of critical surface tension ($\alpha\sim1,\ \beta\sim1/3$) in \cite{HW}; see also the recent review paper \cite{H19}. All these results predict that gravity--capillary periodic waves are linearly transversely unstable. We point out that pure gravity periodic, or solitary, water waves ($\beta=0$) are expected to be linearly transversely stable \cite{APS,HLP}.
	
	For the Euler equations, previous works on transverse instability mostly treat the case of solitary waves; see \cite{GHS,PS,RT2011} for the large surface regime and the more recent work \cite{GSW} for the weak surface tension regime close to the curve $\Gamma$. In both regimes, the dimension-breaking bifurcation has been studied in \cite{GHS02} (large surface tension) and \cite{GSW} (weak surface tension).
	For periodic waves, the transverse instability predicted in the regime of large surface tension ($\alpha\sim1,\ \beta>1/3$) has been confirmed in \cite{Haragus}. In addition, the dimension-breaking bifurcation was studied showing the bifurcation of a one-parameter family of three-dimensional doubly periodic waves, as illustrated in Figure~\ref{fig-2D-sol}. In the present work, we treat these two questions for the periodic waves bifurcating in the open parameter regions I and II. 
	
	For completeness, we mention that there are other stability/instability results for these periodic waves. When the perturbations are constant in $z$, the references~\cite{Kawahara, DR, EM} through formal expansions have provided a characterization for the Benjamin--Feir instability\footnote{This is linear instability with respect to sideband perturbations, which has a different period than that of the main periodic wave. It was first discovered for gravity waves in deep fluids by Benjamin \& Feir~\cite{BF}, Benjamin~\cite{Benjamin} and independently by Whitham~\cite{Whitham2}.} (see e.g.~\cite{BridgesMielke, ChenSu, HurYang, NguyenStrauss} for rigorous proofs), the work~\cite{DT} demonstrates numerically that periodic waves are sometimes spectrally unstable even when the Benjamin--Feir instability is not present, references~\cite{McGoldrick1, McGoldrick2, McGoldrick3, Jones1, Jones2, TDW} indicate through both numerical and theoretical investigations that harmonic resonances feature even more intriguing instability phenomena, e.g. nested instabilities or multiple high-frequency instability bubbles. The works~\cite{HJ, HP17, HP} approach these questions through their own proposals of fully dispersive model equations. In particular,~\cite{HP17, HP} find qualitatively the same instability characterization for periodic waves in their models as~\cite{Kawahara, DR}. Instability under three-dimensional perturbations has been considered numerically, experimentally and using various model equations, such as the Davey--Stewartson equation~\cite{DS, DR, Hayes, HH, ZM}. In particular, the instability criterion that we arrive at here can be formally obtained by taking $l=0$ in equation (3.9) in~\cite{DS}, and using the formulas for the coefficients for gravity--capillary waves in~\cite{AblowitzSegur79, DR}; see Appendix~\ref{app-formal-derivation}. Note that in contrast to the previous studies, we restrict our attention to perturbations which have the same wavelength as the periodic wave in the $X$-direction.

	\color{black}
	
	Our approach to transverse dynamics follows the ideas developed for solitary waves in \cite{GHS,GHS02,GSW}. The starting point of the analysis is a spatial dynamics formulation of the three-dimensional, time-dependent equations \eqref{eq-hydro-nondim}--\eqref{eq-hydro-bc-nondim} in which the horizontal coordinate $z$, transverse to the direction of propagation, plays the role of time.
	
	For the transverse linear instability problem, we consider the linearization of this dyna- mical system at a two-dimensional periodic wave and apply a simple general instability criterion \cite{Godey} adapted to the Euler equations in \cite{GSW}. In Region~I, we show that the periodic waves $\{(\eta_\varepsilon(X),\phi_\varepsilon(X,Y))\}_{\varepsilon\in(-\varepsilon_0,\varepsilon_0)}$ are transversely linearly unstable, provided $\varepsilon_0$ is sufficiently small. In Region~II, we obtain transverse linear instability for the periodic waves $\{(\eta_{\eps,2},\phi_{\eps, 2})\}_{\eps \in (-\eps_0, \eps_0)}$ with wavenumbers close to the largest root $k_{*,2}$ of the linear dispersion relation. For the second family of periodic waves,  $\{(\eta_{\eps,1},$ $\phi_{\eps, 1})\}_{\eps \in (-\eps_0, \eps_0)}$ with wavenumbers close to $k_{*,1}$, we can only conclude on transverse instability for parameter values $(\alpha, \beta)$ situated in the open region between the curves $\Gamma$ and $\Gamma_2$; see the right panel in Figure~\ref{fig-lindis}.
	The dimension-breaking bifurcation is studied for the transversely linearly unstable periodic waves. Here, we use the time-independent, but nonlinear, version of the dynamical system above. Applying a Lyapunov center theorem, we prove that from each unstable periodic wave bifurcates a family of doubly periodic waves. 
	
	The common part of the proofs of these two results is the analysis of the purely imaginary spectrum of the linearized operator at the two-dimensional periodic wave. This analysis is the major part of our work. Our main result shows that this linear operator possesses precisely one pair of simple nonzero purely imaginary eigenvalues. Though it relies upon standard perturbation arguments for linear operators, the proof is rather long because of the complicated formulas for the linear operator. This spectral result is the key property allowing to apply both the transverse instability criterion and the Lyapunov center theorem.
	
	In the following theorem, we summarize the results obtained for Region~I. 
	\begin{thm}[Region~I]
		\label{thm-main-II}
		Fix $(\alpha,\beta)$ in Region~I and let $k_* > 0$ be the unique positive root of the linear dispersion relation \eqref{eq-lindisrel}. 
		\begin{enumerate}[(i)]
			\item \label{thm-main-II-exist} (Existence) There exist $\eps_0 > 0$ and a one-parameter family of two-dimensional steady solutions $\{(\eta_\eps(X), \phi_\eps(X, Y))\}_{\eps \in (-\eps_0, \eps_0)}$ to equations~\eqref{eq-hydro-nondim}--\eqref{eq-hydro-bc-nondim}, such that $(\eta_0,\phi_0) =(0, 0)$ and $(\eta_\eps, \phi_\eps)$ are periodic in $X$ with wavenumber $k_\eps = k_* + \O(\eps^2)$.
			\item \label{thm-main-II-unstable} (Transverse instability) There exists $\eps_1>0$ such that for each $\eps \in (-\eps_1, \eps_1)$ the periodic solution $(\eta_\eps(X), \phi_\eps(X,Y))$ is transversely linearly unstable.
			\item \label{thm-main-II-dimbreak} (Dimension-breaking bifurcation) There exists $\eps_2>0$, such that for each $\eps \in (-\eps_2, \eps_2)$ there exist $\delta_\eps > 0$, $\ell_\eps^*>0$, and a one-parameter family of three-dimensional doubly periodic waves $\{(\eta_\eps^{\delta}(X, z),\phi_\eps^{\delta}(X,Y,z))\}_{\delta \in (-\delta_\eps, \delta_\eps)}$, with wavenumber $k_\eps$ in $X$ and wavenumber $\ell_\delta = \ell_\eps^* + \O(\delta^2)$ in $z$, bifurcating from the periodic solution $(\eta_\eps(X),$ $\phi_\eps(X, Y))$. 
		\end{enumerate}
	\end{thm}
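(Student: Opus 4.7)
The plan is to reduce all three parts of Theorem~\ref{thm-main-II} to a single spectral statement about the linearization of the spatial-dynamics reformulation of the Euler system, following the strategy used for related problems in \cite{GHS02,GSW,Haragus}.

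For part~(\ref{thm-main-II-exist}), since $k_*$ is a simple positive root of $\mathcal{D}(k)=0$, the linearized steady problem at $(0,0)$, restricted to $X$-periodic even-in-$X$ functions with fundamental wavenumber near $k_*$, has a one-dimensional kernel. A Crandall--Rabinowitz / Lyapunov--Schmidt argument then yields the family $\{(\eta_\eps,\phi_\eps)\}_{\eps\in(-\eps_0,\eps_0)}$, while the reflection symmetry $X\mapsto-X$ combined with the even-in-$X$ normalization forces $k_\eps$ to depend on $\eps$ only through even powers, giving $k_\eps=k_*+\O(\eps^2)$. This step is standard and may be imported from the existence results cited in \cite{DI03,HI}.

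For parts~(\ref{thm-main-II-unstable}) and~(\ref{thm-main-II-dimbreak}), I would recast the three-dimensional Euler system as a Hamiltonian dynamical system in which the transverse coordinate $z$ plays the role of time; the two-dimensional periodic wave $(\eta_\eps,\phi_\eps)$ then becomes an equilibrium whose linearization is an unbounded operator $A_\eps$ on a phase space of $X$-periodic functions with wavenumber $k_\eps$. The central task, which I expect to be the main obstacle, is to show that for small $\eps$ the operator $A_\eps$ possesses precisely one pair of simple, nonzero, purely imaginary eigenvalues $\pm\ii\ell_\eps^*$ with $\ell_\eps^*=\ell_*^0+\O(\eps^2)$. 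At $\eps=0$ the operator $A_0$ has constant coefficients in $X$, so one decomposes in Fourier modes $\e^{\ii n k_* X}$ for $n\in\ZZ$ and reduces the problem on each mode to an explicit algebraic equation in the transverse wavenumber $\ell$. Using the Region~I conditions $\alpha\in(0,1)$ and $\beta>0$, a direct computation should show that exactly one such mode produces a simple purely imaginary pair $\pm\ii\ell_*^0$, while the remaining imaginary spectrum is pinned at zero by translational and gauge symmetries. Analytic perturbation theory then propagates this simple pair to $\pm\ii\ell_\eps^*$ for $\eps$ small, and the Hamiltonian/reversible structure forbids it from leaving the imaginary axis. The two difficulties are the unwieldy formulas for $A_\eps$ (involving a Dirichlet--Neumann type operator on the perturbed fluid strip) and the need to verify uniformly in the Fourier index $n$ that no further imaginary eigenvalues enter from infinity for small $\eps$.

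Once the spectral picture is in place, part~(\ref{thm-main-II-unstable}) follows from the abstract transverse instability criterion of \cite{Godey}, in the form adapted to the Euler equations in \cite{GSW}: the simple imaginary pair $\pm\ii\ell_\eps^*$ of $A_\eps$ produces, after a non-degeneracy check (the non-vanishing of a symplectic form on the associated two-dimensional eigenspace, computable at $\eps=0$ and extended by continuity), a solution of the linearized time-dependent equations bounded in $(X,z)$ and exponentially growing in $t$. Part~(\ref{thm-main-II-dimbreak}) then follows by applying a Lyapunov center theorem to the nonlinear time-independent spatial system at the equilibrium $(\eta_\eps,\phi_\eps)$: the simple pair $\pm\ii\ell_\eps^*$ yields a one-parameter family of $z$-periodic orbits with period $2\pi/\ell_\delta$, $\ell_\delta=\ell_\eps^*+\O(\delta^2)$, which are precisely the claimed doubly periodic waves, and the $X$-wavenumber remains $k_\eps$ because the phase space consists of functions $X$-periodic with that wavenumber. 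Any non-resonance condition (no other imaginary eigenvalue of $A_\eps$ should be an integer multiple of $\ii\ell_\eps^*$) is inherited from the spectral description, with reversibility handling the residual degeneracies as in \cite{Haragus}.
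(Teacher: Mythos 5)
Your overall architecture (reduce everything to a spectral statement for the linearization of the $z$-spatial-dynamics formulation, then invoke the instability criterion of \cite{Godey,GSW} and a Lyapunov center theorem) matches the paper, and your route to part~(\ref{thm-main-II-exist}) via Lyapunov--Schmidt is an acceptable alternative to the paper's Lyapunov-center argument in the $x$-dynamics. However, the core of your spectral step is wrong, and it is precisely the step where all the work lies.

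You claim that at $\eps=0$ a Fourier-mode computation "should show that exactly one mode produces a simple purely imaginary pair $\pm\ii\ell_*^0$", which you then propagate by analytic perturbation theory. In Region~I this is false: the mode-$n$ condition is $(\alpha+\beta\sigma^2)\sigma\sinh\sigma-n^2k_*^2\cosh\sigma=0$ with $\sigma^2=n^2k_*^2+\ell^2$, and it is satisfied only for $\ell=0$, $n\in\{0,\pm1\}$. Thus $0$ is the \emph{only} purely imaginary eigenvalue of $L_0$; it has geometric multiplicity three and algebraic multiplicity six, and there is no nonzero imaginary pair to perturb. The pair $\pm\ii\ell_\eps$ is instead \emph{born out of} this degenerate zero eigenvalue: four of the six dimensions remain pinned at $0$ by the gauge symmetry $\phi\mapsto\phi+C$ and by $x$-translation invariance, while the remaining $2\times2$ reversible block has trace zero and eigenvalues $\pm\sqrt{-\det\mathcal M(\eps)}$ with $\det\mathcal M(\eps)=-m_{21}^{(2)}\eps^2+\O(\eps^4)$. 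Reversibility alone cannot "forbid the pair from leaving the imaginary axis": it only forces the pair to be either both real or both purely imaginary, and deciding which requires proving $m_{21}^{(2)}<0$. That sign computation — relating $m_{21}^{(2)}$ to the wavenumber coefficient $k_2$ and then analyzing its sign via $\mathcal D(2k_*)$ — is the main content of the paper and is entirely absent from your proposal. Note also the consequence $\ell_0=0$, $\ell_\eps=\O(\eps)$, contradicting your expansion $\ell_\eps^*=\ell_*^0+\O(\eps^2)$ with a nonzero leading term.

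A secondary gap: for the dimension-breaking bifurcation, $0$ is always an eigenvalue of $L_\eps$ (multiplicity four) on the full phase space, so the non-resonance hypothesis of the Lyapunov center theorem (which requires $\ii n\ell_\eps$ in the resolvent set for all $n\neq\pm1$, including $n=0$) fails there. One must first restrict to the subspace fixed by the reflection symmetry $S$ in $x$ (not the reverser $R$ in $z$), on which the zero eigenvalue disappears and only the pair $\pm\ii\ell_\eps$ survives. Your appeal to "reversibility handling the residual degeneracies" does not address this.
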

	
	We point out that $\pm {\ii}\ell_\eps^*$ where $\ell_\eps^*>0$ is given in Theorem~\ref{thm-main-II}(\ref{thm-main-II-dimbreak}) are the two nonzero purely imaginary eigenvalues of the linearization at the periodic wave. The results found for Region~II are summarized in Theorem~\ref{thm-main-III} from Section~\ref{sect-reg-III}.
	
	In our presentation we focus on Region~I, the arguments being, up to some computations, the same for Region~II. In Section~\ref{s prelim} we recall the spatial dynamics formulation of the three-dimensional time-dependent Euler equations \eqref{eq-hydro-nondim}--\eqref{eq-hydro-bc-nondim} from \cite{GSW} and the existence result for two-dimensional periodic waves given in Theorem~\ref{thm-main-II}(\ref{thm-main-II-exist}). We also give some explicit expansions of these solutions which are computed in Appendix~\ref{app-2D-calc}. In Section~\ref{s linear} we prove the results for the linear operator. Some of the long computations needed here are given in Appendices~\ref{app-m212} and~\ref{app-m212-sign}. In Section~\ref{s transverse} we present the transverse dynamics results and in Section~\ref{sect-reg-III} we discuss the results for Region~III. Finally, in Appendix~\ref{app-Lyapunov} we recall an infinite-dimensional version of the Lyapunov center theorem, and in Appendix~\ref{app-formal-derivation} we show how the instability criterion can be derived formally using the Davey-Stewartson approximation.

	\section{Preliminaries}
	\label{s prelim}
	
	In this section, we recall the spatial dynamics formulation from \cite{GSW} and
	the result on existence of two-dimensional steady periodic solutions.

	\subsection{Spatial dynamics formulation}
	\label{sect-spatial-formulation}
	
	Following \cite{GSW}, we make the change of variables
	\[
	Y=y(1+\eta(X, z, t)),\quad \phi(X,Y, z, t) = \Phi(X, y, z, t),
	\]
	in~\eqref{eq-hydro-nondim}--\eqref{eq-hydro-bc-nondim} to flatten the free surface. Since we consider periodic solutions, in addition, we set $X=kx$ with $k$ the wavenumber in $x$. We introduce two new variables,
	\[\begin{split}\omega &= -\int_0^1 \left( \Phi_z - \frac{y \eta_z \Phi_y}{1+\eta}\right) y \Phi_y \dif y + \frac{\beta \eta_z}{(1+k^2\eta_x^2+\eta_z^2)^{1/2}}, \\
	\xi &= (1+\eta) \left( \Phi_z - \frac{y \eta_z \Phi_y}{1+\eta}\right).
	\end{split}\]
	and set $U = (\eta, \omega, \Phi, \xi)^{\text{T}}$. Then, the equations~\eqref{eq-hydro-nondim}--\eqref{eq-hydro-bc-nondim} can be written as a dynamical system of the form
	\begin{equation}\label{eq-hydro-evo}
	\frac{\dif U}{\dif z} = D U_t + F(U),
	\end{equation}
	with boundary conditions
	\begin{equation} \label{eq-hydro-bc-evo} 
	\Phi_y = y \eta_t + B(U) \quad \text{on} \quad y=0,1.
	\end{equation}
	Here, $D$ is the linear operator defined by
	\[ D U=(0, \Phi|_{y=1}, 0, 0)^{\text{T}},\]
	$F$ is the nonlinear mapping $F(U)=(F_1(U), F_2(U), F_3(U), F_4(U))^{\text{T}}$ given by
	\[
	\begin{split}
	F_1(U) &= W \left(\frac{1+k^2\eta_x^2}{\beta^2-W^2}\right)^{1/2},\\
	F_2(U) &= \frac{F_1(U)}{(1+\eta)^2} \int_0^1 y \Phi_y \xi \dif y - k\left[k\eta_x \frac{W}{F_1(U)}\right]_x + \alpha \eta - k\Phi_x|_{y=1} \\
	&\quad + \int_0^1 \Bigg\{ \frac{\xi^2 - \Phi^2_y}{2(1+\eta)^2} + \frac{k^2}{2} \left( \Phi_x - \frac{y \eta_x \Phi_y}{1+\eta}\right)^2 + k^2\left[ \left(\Phi_x - \frac{y \eta_x \Phi_y}{1+\eta}\right) y \Phi_y \right]_x \\
	& \quad \quad + k^2 \left(\Phi_x - \frac{y \eta_x \Phi_y}{1+\eta}\right) \frac{y \eta_x \Phi_y}{1+\eta} \Bigg\} \dif y, \\
	F_3(U) &= \frac{\xi}{1+\eta}+ \frac{y \Phi_y}{1+\eta} F_1(U), \end{split}\]
	and
	\[\begin{split}
	F_4(U) &= -\frac{\Phi_{yy}}{1 + \eta} - k^2\left[(1+\eta)\left(\Phi_x - \frac{y \eta_x \Phi_y}{1+\eta}\right)\right]_x \hspace{4.8cm}\\
	&\quad +  k^2\left[\left(\Phi_x - \frac{y \eta_x \Phi_y}{1+\eta}\right) y \eta_x \right]_y + \frac{(y\xi)_y}{1+\eta} F_1(U),
	\end{split}
	\]
	where
	\[
	W = \omega + \frac{1}{1+\eta} \int_0^1 y \Phi_y \xi \dif y,
	\]
	and $B$ is the nonlinear mapping defined by
	\[
	B(U) = - ky\eta_x +  k^2y\eta_x \Phi_x + \frac{\eta\Phi_y}{1+\eta}- \frac{k^2y^2\eta_x^2 \Phi_y}{1+\eta} + \frac{y\xi}{1+\eta} F_1(U).
	\]
	The choice of the function spaces  is made precise later in Sections~\ref{s linear} and~\ref{s transverse}.
	
	The system \eqref{eq-hydro-evo}--\eqref{eq-hydro-bc-evo} inherits the symmetries of the Euler equations \eqref{eq-hydro-nondim}--\eqref{eq-hydro-bc-nondim}. As a consequence of the horizontal spatial reflection $z\mapsto-z$, the system \eqref{eq-hydro-evo}--\eqref{eq-hydro-bc-evo} is reversible with reversibility symmetry $R$ acting by
	\begin{equation}
	\label{eq-revers}
	R\begin{pmatrix} \eta\\ \omega\\\Phi \\ \xi \end{pmatrix} (x,y,z,t)= \begin{pmatrix}\eta\\ -\omega\\ \Phi\\ -\xi \end{pmatrix}(x,y,-z,t),
	\end{equation}
	which anti-commutes with $D$ and $F$ and commutes with $B$. The second horizontal spatial reflection $x\mapsto-x$, implies that the system \eqref{eq-hydro-evo}--\eqref{eq-hydro-bc-evo} possesses a reflection symmetry
	\begin{equation}
	\label{eq-reflec}
	S \begin{pmatrix}\eta\\ \omega\\ \Phi\\ \xi \end{pmatrix}(x,y,z,t)= \begin{pmatrix}\eta \\ \omega \\ -\Phi \\-\xi \end{pmatrix} (-x,y,z,t),
	\end{equation}
	which commutes with $D$, $F$, and $B$. There are in addition two continuous symmetries, which are the horizontal spatial translations in $x$ and $z$.

	\subsection{Two-dimensional steady periodic waves}
	\label{sect-2D-exist}
	
	Spatial dynamics also provides an efficient method for the study of the existence of two-dimensional steady waves of the Euler equations \eqref{eq-hydro-nondim}--\eqref{eq-hydro-bc-nondim}. This idea, which goes back to the work by Kirchg\"assner \cite{Ki82}, consists in writing the two-dimensional steady Euler equations as a dynamical system of the form
	\begin{equation}\label{e:2D}
	\frac{\dif U}{\dif x} = LU + R(U),
	\end{equation}
	in which $x$ is now the timelike variable, and $L$ and $R$ denote linear and nonlinear parts, respectively. A phase space $\mathcal X$ consisting of $y$-dependent functions is chosen such that the linear operator $L$ is closed with densely and compactly embedded domain $\mathcal Y\subset \mathcal X$. The first two boundary conditions in \eqref{eq-hydro-bc-nondim} are part of the definition of the domain $\mathcal Y$. There are several different such formulations of two-dimensional steady problem; see, for instance, \cite{Kirchgassner, HI} for two different formulations as a reversible dynamical system, and \cite{Groves04} for a formulation as a Hamiltonian system. 
	Two-dimensional steady water waves are bounded solutions of the dynamical system \eqref{e:2D} and can be found using tools from the theory of dynamical systems and bifurcation theory.
	
	In particular, periodic waves can be obtained by a direct application of the Lyapunov center theorem; see Theorem~\ref{app-thm-Lyapunov}. The key observation is that the purely imaginary eigenvalues of the operator $L$ are given by the real roots of the linear dispersion relation \eqref{eq-lindisrel}. As shown, for instance in \cite{Kirchgassner}, for any pair of parameters $(\alpha,\beta)$ in Region~I, the linear operator $L$ possesses precisely one pair of simple purely imaginary eigenvalues $\pm {\ii} k_*$, with $k_*$ the unique positive root of the linear dispersion relation \eqref{eq-lindisrel}. Similarly, in Region II there are two pairs of simple purely imaginary eigenvalue $\pm {\ii}k_{*,1}$ and $\pm {\ii}k_{*,2}$. Then, the reversibility of the dynamical system \eqref{e:2D} together with a direct check of the resolvent estimates \eqref{e:resolvent} allow to apply Theorem~\ref{app-thm-Lyapunov} and prove the result in Theorem~\ref{thm-main-II}(\ref{thm-main-II-exist}) for Region~I and the result in Theorem~\ref{thm-main-III}(\ref{thm-main-III-exist}) for Region~II.
	
	In addition, for our purposes we need to compute the first two terms of the expansion in $\eps$ of the two-dimensional periodic solutions. For $(\alpha,\beta)$ in Region~I, we write
	\[
	X=k_\eps x,\quad \eta_\eps(X) = \widetilde{\eta}_\eps(x),\quad
	\Phi_\eps(X,y)=\widetilde{\Phi}_\eps(x,y),
	\]
	so that $\widetilde{\eta}_\eps$ and $\widetilde{\Phi}_\eps$ are $2\pi$-periodic in $x$, and consider the expansions
	\begin{equation}\label{eq-2Dsol-exp}
	\begin{split}
	&k_\eps = k_* + \eps^2 k_2 + \O(\eps^3),\\
	&\widetilde{\eta}_\eps(x) = \eps \eta_1(x) + \eps^2 \eta_2(x) + \O(\eps^3),\\
	&\widetilde{\Phi}_\eps(x, y) = \eps \Phi_1(x,y) + \eps^2 \Phi_2(x,y) + \O(\eps^3),
	\end{split}
	\end{equation}
	where $k_*$ is the positive root of the linear dispersion relation \eqref{eq-lindisrel}. Substituting these expansions into the Euler equations \eqref{eq-hydro-nondim}--\eqref{eq-hydro-bc-nondim}, we obtain in Appendix~\ref{app-2D-calc} the following explicit formulas:
	\begin{equation} \label{e:k2}
	\begin{split}k_2 = \frac{k_*^3}{d(k_*)} \Big( 
	&
	\left(9\alpha \beta +16\right)k_*  
	- 12\alpha \beta k_*\cosh(2k_*)  + 3\alpha \beta k_* \cosh(4k_*)\\
	&
	- 8\alpha(2c(k_*) -1) \sinh(2k_*)-4\alpha (c(k_*)+2)\sinh(4k_*)
	\Big),
	\end{split}
	\end{equation}
	and
	\begin{equation}\label{e:expan}
	\begin{split}
	&\eta_1(x)=\sinh(k_*)\cos(x),\quad
	\Phi_1(x,y) = \cosh(k_*y)\sin(x), \\
	&\eta_2(x) = \frac{k_*}{4}\left(c(k_*) + 1 \right) \sinh(2k_*)\cos(2x) - \frac{k_*^2}{4\alpha}, \\
	&\Phi_2(x,y) =  \frac{k_*}{4}\left(c(k_*) \cosh(2k_*y)+2\sinh(k_*)y\sinh(k_*y)\right) \sin(2x),
	\end{split}
	\end{equation}
	where
	\begin{equation}\label{e:ck*}
	\begin{split}
	c(k_*)&=-1- \frac{k_*(\cosh(2k_*)+2)}{\mathcal D(2k_*)},\\
	d(k_*)&=32\alpha\left(2\beta k_*(\cosh(2k_*)-1) +  2k_* -\sinh(2k_*) \right),
	\end{split}
	\end{equation}
	and $\mathcal D(k)$ is the linear dispersion relation \eqref{eq-lindisrel}.
	In addition, the function $\widetilde{\eta}_\eps$ is even in $x$, whereas $\widetilde{\Phi}_\eps$ is an odd function.  
	
	For each $\eps \in (-\eps_0, \eps_0)$, the solution $({\eta}_\eps,{\Phi}_\eps)$ of the Euler equations \eqref{eq-hydro-nondim}--\eqref{eq-hydro-bc-nondim} provides a solution
	\begin{equation}\label{e:Ueps}
	U_\eps (x,y)= (\widetilde{\eta}_\eps(x), 0, \widetilde{\Phi}_\eps(x, y), 0)^{\TT}
	\end{equation}
	of the dynamical system~\eqref{eq-hydro-evo}--\eqref{eq-hydro-bc-evo} for $k=k_\eps$, hence satisfying
	\begin{equation}\label{e:Ueps0}
	\begin{cases} F(U_\eps) = 0,\\
	\widetilde{\Phi}_{\eps y} = B(U_\eps), & \text{on $y = 0,1$}.
	\end{cases}
	\end{equation}
	In addition, the above parity properties of  $\widetilde{\eta}_\eps$ and $\widetilde{\Phi}_\eps$ imply that $SU_\eps=U_\eps$ where $S$ is the reflection symmetry given in \eqref{eq-reflec}.

	\section{Analysis of the linear operator}
	\label{s linear}
	
	For fixed $(\alpha,\beta)$ in Region~I, we denote by $L_\eps$ the linear operator which appears  in the linearization of the dynamical sytem \eqref{eq-hydro-evo}--\eqref{eq-hydro-bc-evo} at the periodic wave $U_\eps$ for $k=k_\eps$. We prove the properties of $L_\eps$ needed for the transverse dynamics analysis in Section~\ref{s transverse}.
	
	For notational simplicity we remove the tilde from \eqref{e:Ueps0} and write from now on ${\eta}_\eps$ and  ${\Phi}_\eps$ instead of $\widetilde{\eta}_\eps$ and $\widetilde{\Phi}_\eps$, respectively.

	\subsection{The linear operator $\boldsymbol{L_\eps}$}
	\label{s Leps}
	
	A direct computation of the differential of $F$ at the periodic wave $U_\eps$ gives the following explicit formulas for $L_\eps U\coloneqq  \dif F[U_\eps] U$,
	\begin{equation}\label{eq-linop}
	L_\eps U
	=
	\begin{pmatrix} \omega/\beta + H_1(\omega, \xi) \\ \alpha \eta - \beta k_\eps^2 \eta_{xx} - k_\eps \Phi_x|_{y=1} + H_2(\eta, \Phi)\\ \xi + H_3(\omega, \xi) \\ -k_\eps^2 \Phi_{xx} - \Phi_{yy} + H_4(\eta, \Phi)\end{pmatrix},\quad
	U= \begin{pmatrix} \eta\\\omega\\ \Phi \\ \xi \end{pmatrix},
	\end{equation}
	where
	\[\begin{split}&H_1(\omega, \xi) = \frac{(1 + k^2_\eps\eta_{\eps x}^2)^{1/2}}{\beta} \left( \omega + \frac{1}{1+\eta_\eps} \int_0^1 y \Phi_{\eps y} \xi \dif y \right) - \frac{\omega}{\beta},\\
	&H_2(\eta, \Phi) = \beta k^2_\eps \eta_{xx} - \beta k^2_\eps \left[\frac{\eta_x}{(1+k^2_\eps \eta_{\eps x}^2)^{3/2}}\right]_x \\
	& \quad + \int_0^1 \Bigg\{k^2_\eps\Phi_{\eps x}\Phi_x - \frac{\Phi_{\eps y}\Phi_y}{(1+\eta_\eps)^2} + \frac{\Phi_{\eps y}^2\eta}{(1+\eta_\eps)^3}-k^2_\eps\frac{y^2\eta_{\eps x}^2\Phi_{\eps y}\Phi_y}{(1+\eta_\eps)^2}\\
	& \quad \qquad -k^2_\eps\frac{y^2\eta_{\eps x}\Phi_{\eps y}^2 \eta_x}{(1+\eta_\eps)^2}+ k_\eps\frac{y^2 \eta_{\eps x}\Phi_{\eps y}^2 \eta}{(1+\eta_\eps)^3}\\
	&\quad \qquad + k^2_\eps \left[y\Phi_{\eps y}\Phi_x + y\Phi_{\eps x}\Phi_y - \frac{2y^2\eta_{\eps x}\Phi_{\eps y}\Phi_y}{1+\eta_\eps}-\frac{y^2 \Phi_{\eps y}^2 \eta_x}{1+\eta_\eps} + \frac{y^2 \Phi_{\eps y}^2 \eta_{\eps x}\eta}{(1+\eta_\eps)^2}\right]_x\Bigg\} \dif y,\\
	\end{split}\]
	\[\begin{split}
	&H_3(\omega, \xi) = -\frac{\eta_\eps \xi}{1+\eta_\eps} + \frac{1}{1+\eta_\eps}\left(H_1(\omega, \xi)+\frac{\omega}{\beta}\right)y \Phi_{\eps y}, \hspace{5cm}\\
	&H_4(\eta, \Phi) = k^2_\eps\Big[-\eta_\eps \Phi_x - \Phi_{\eps x}\eta + y \Phi_{\eps y}\eta_x + y\eta_{\eps x}\Phi_y\Big]_x + \Big[k_\eps y\eta_x + B_{l\eps}(\eta, \Phi)\Big]_y.
	\end{split}\]
	To this expression of $L_\eps U$ we add the linear boundary conditions obtained by taking the differential of $B$ at $U_\eps$,
	\begin{equation}\label{e:linbc}
	\Phi_y=  B_{l\eps}(U) \coloneqq \dif B[U_\eps] U=0,\quad \text{on $y = 0,1$},
	\end{equation}
	where
	\[\begin{split}B_{l\eps}(U) &= k_\eps y(-\eta_x + k_\eps\eta_{\eps x} \Phi_x + k_\eps \Phi_{\eps x}\eta_x) \\
	& \quad + \frac{\eta_\eps \Phi_y}{1+\eta_\eps} + \frac{\Phi_{\eps y} \eta}{(1+\eta_\eps)^2} + k^2_\eps\frac{y^2 \eta_{\eps x}^2 \Phi_{\eps y} \eta}{(1+\eta_\eps)^2} - k^2_\eps\frac{y^2\eta_{\eps x}^2 \Phi_y}{1+\eta_\eps} - 2k^2_\eps \frac{y^2 \eta_{\eps x}\Phi_{\eps y} \eta_x}{1+\eta_\eps}.
	\end{split}\]
	Notice that $B_{l\eps}(U)$ only depends on the components $\eta$ and $\Phi$ of $U$. We will sometimes write $B_{l\eps}(\eta, \Phi)$ for convenience.

	For $s\geq0$, we define the Hilbert space
	\begin{equation}\label{e:Hs}
	\mathcal X^s = H^{s+1}_{\text{per}}(\mathbb S) \times H^s_{\text{per}}(\mathbb S) \times H^{s+1}_{\text{per}}(\Sigma) \times H^s_{\text{per}}(\Sigma),
	\end{equation}
	where $\mathbb{S} = (0, 2\pi)$, $\Sigma = \mathbb{S} \times (0,1)$, and
	\[\begin{split} &H^{s}_{\per}(\mathbb S) = \{u \in H^{s}_{\loc}(\R) \, : \, u(x+2\pi) = u(x), \ x \in \R\},\\
	&H^{s}_{\per}(\Sigma) = \{u \in H^{s}_{\loc}(\R \times (0,1)) \, : \, u(x+2\pi, y) = u(x,y), \ y \in (0,1), \ x \in \R\}.
	\end{split}\]
	The action of the operator $L_\eps$ is taken in $\mathcal X^0$ with domain of definition
	\[\mathcal Y^1_\eps = \{U=(\eta, \omega, \Phi, \xi)^{\TT} \in \mathcal X^1 \, : \, \Phi_y = B_{l\eps}(\eta, \Phi) \ \text{on $y=0,1$}\},\]
	chosen to include the boundary conditions. Then $L_\eps$ is well-defined and closed in $\mathcal X^0$, and its domain $\mathcal Y^1_\eps$  is compactly embedded in $\mathcal X^0$. The latter property implies that the operator $L_\eps$ has pure point spectrum consisting of isolated eigenvalues with finite algebraic multiplicity. As a consequence of the reflection symmetry $S$ given in \eqref{eq-reflec}, which commutes with $F$ and leaves invariant $U_\eps$, the subspaces
	\begin{equation}\label{e:X0pm}
	\mathcal X^0_+=\{U\in \mathcal X^0 \,:\, SU=U\},\quad
	\mathcal X^0_-=\{U\in \mathcal X^0 \,:\, SU=-U\},
	\end{equation}
	are invariant under the action of $L_\eps$.
	
	One inconvenience of this functional-analytic setting is that the domain of definition $\mathcal Y^1_\eps$ of the linear operator $L_\eps$ depends on $\eps$. This difficulty is well-known and can be handled using an appropriate change of variables first introduced for the three-dimensional steady nonlinear Euler equations in \cite{GM}. Here, we proceed as in \cite{GHS} and replace $\Phi$ by $\Upsilon =\Phi + \chi_y$, where $\chi$ is the unique solution of the elliptic problem
	\[
	\begin{aligned}-k_\eps^2\chi_{xx}-\chi_{yy}&=B_{l\eps}(U)&  &\text{in $\Sigma$},\\
	\chi&=0& &\text{on $y = 0,1$,} \end{aligned}
	\]
	so that $\Upsilon$ satisfies the boundary conditions $\Upsilon_y=0$ on $y=0,1$ which do not depend on $\eps$. The linear mapping defined by $G_\eps(\eta,\omega,\Phi,\xi)^{\TT} = (\eta,\omega,\Upsilon,\xi)^{\TT}$, is a linear isomorphism in both $\mathcal X^0$ and $\mathcal X^1$, it depends smoothly on $\eps$ and the same is true for its inverse $G_\eps^{-1}$. Setting $\widetilde L_\eps= G_\eps L_\eps G_\eps^{-1}$ the operator $\widetilde L_\eps$ acts in $\mathcal X^0$ with domain of definition
	\[\mathcal Y^1 = \{U=(\eta, \omega, \Upsilon, \xi)^{\TT} \in \mathcal X^1 \, : \,\Upsilon_y = 0 \ \text{on $y=0,1$}\},\]
	which does not depend on $\eps$ anymore. While $\widetilde L_\eps$ allows us to rigorously apply general results for linear operators, it is more convenient to use $L_\eps$ for explicit computations.

	\subsection{Spectral properties of $\boldsymbol{L_0}$}
	\label{s:L0}
	
	The unperturbed operator $L_0$ obtained for $\eps=0$ is a differential operator with constant coefficients. Therefore, eigenvalues, eigenfunctions, and generalized eigenfunctions can be explicitly computed using Fourier series in the variable $x$. In particular, for purely imaginary values ${\ii}\ell$ with $\ell\in\R$ the eigenvalue problem $(L_0 - {\ii}\ell\, \mathbb I)U = 0$ possesses nontrivial solutions in the $n^{\text{th}}$ Fourier mode  if and only if
	\[
	(\alpha + \beta \sigma^2)\sigma \sinh \sigma
	-n^2k_*^2 \cosh \sigma= 0 \quad\text{with}\quad \sigma^2=n^2k_*^2+\ell^2.
	\]
	For fixed $(\alpha,\beta)$ in Region~I, this equality holds if and only if $\ell = 0$ and $n\in\{0, \pm 1\}$; see also \cite{Groves}. Consequently, $0$ is the only purely imaginary eigenvalue of $L_0$ and it has geometric multiplicity three. The associated eigenvectors are given by the explicit formulas:
	\begin{equation}\label{e:eig0}
	\begin{split} &\zeta_0 = \begin{pmatrix} 0 \\ 0 \\ 1 \\ 0 \end{pmatrix}, \quad \zeta_-= \begin{pmatrix} -\sinh(k_*)\sin(x) \\ 0 \\ \cosh(k_*y)\cos(x) \\ 0 \end{pmatrix}, \quad \zeta_+ = \begin{pmatrix} \sinh(k_*)\cos(x) \\ 0 \\ \cosh(k_*y) \sin(x) \\ 0 \end{pmatrix}.
	\end{split}
	\end{equation}
	Associated to each eigenvector there is a Jordan chain of length two, so that the algebraic multiplicity of the eigenvalue $0$ is six. The generalized eigenvectors associated to $\zeta_0, \zeta_-$ and $\zeta_+$ are given by, respectively,
	\begin{equation}\label{e:geneig0}
	\psi_0 = \begin{pmatrix} 0 \\ 0 \\ 0 \\ 1 \end{pmatrix}, \quad \psi_- = \begin{pmatrix}0\\ -\beta \sinh(k_*) \sin(x) \\ 0 \\ \cosh(k_*y)\cos(x)\end{pmatrix}, \quad \psi_+=\begin{pmatrix} 0 \\ \beta\sinh(k_*)\cos(x) \\ 0 \\ \cosh(k_*y)\sin(x)\end{pmatrix}.
	\end{equation}
	Notice that the reflection symmetry $S$ given in \eqref{eq-reflec} acts on these eigenvectors as follows: 
	\[\begin{aligned}
	&S\zeta_0 = -\zeta_0,& \quad &S\zeta_-=-\zeta_-,&\quad &S\zeta_+=\zeta_+,&\\
	&S\psi_0= - \psi_0,&\quad &S\psi_-=-\psi_-,& \quad &S\psi_+ = \psi_+.&
	\end{aligned}\]
	These formulas are consistent with the ones already found in \cite{Groves}. The remaining eigenvalues of $L_0$ are bounded away from the imaginary axis.
	
	\subsection{Main result}
	
	We summarize in the next theorem the properties of the linear operator $L_\eps$ needed for our transverse dynamics analysis. The same properties hold for the operator $\widetilde L_\eps$.
	
	\begin{thm}[Linear operator]\label{lem-linop-spectral}
		There exist positive constants $\eps_1$, $C_1$, and $\ell_1$, such that for each $\eps \in (-\eps_1, \eps_1)$ the following properties hold.
		\begin{enumerate}[(i)]
			\item \label{lem-linop-spectral-i}
			The linear operator $L_\eps$ acting in $\mathcal X^0$ with domain $\mathcal Y^1_\eps$ has an eigenvalue $0$ with algebraic multiplicity four, and two simple purely imaginary eigenvalues $\pm {\ii} \ell_\eps$ with $\ell_\eps>0$ and $\ell_0=0$. Any other purely imaginary value ${\ii}\ell\in{\ii}\R \setminus \{0, \pm {\ii}\ell_\eps\}$ belongs to the resolvent set of $L_\eps$.
			\item \label{lem-linop-spectral-ii} The restriction of $L_\eps$ to the invariant subspace $\mathcal X^0_+$ has the two simple purely imaginary eigenvalues $\pm {\ii} \ell_\eps$ and any other value ${\ii}\ell\in{\ii}\R \setminus \{\pm{\ii}\ell_\eps\}$ belongs to the resolvent set.
			\item \label{lem-linop-spectral-iii} The inequality 
			\[\left\|(L_\eps-{\ii}\ell \,\mathbb I)^{-1}\right\|_{\mathcal L(X^0)} \leq \frac{C_1}{|\ell|},\]
			holds for each real number $\ell$ with $|\ell|>\ell_1$.
		\end{enumerate}
	\end{thm}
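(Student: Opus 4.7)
The plan is to prove the statement for the conjugated operator $\widetilde L_\eps$ introduced in Section~\ref{s Leps}, whose domain $\mathcal Y^1$ is $\eps$-independent so that analytic perturbation theory applies directly; the conclusions transfer to $L_\eps$ via the smooth isomorphism $G_\eps$. The reflection $S$ commutes with $\widetilde L_\eps$, inducing the splitting $\mathcal X^0=\mathcal X^0_+\oplus\mathcal X^0_-$. From Section~\ref{s:L0}, $\sigma(L_0)\cap\i\R=\{0\}$ with an $\eps$-independent spectral gap, and the basis \eqref{e:eig0}--\eqref{e:geneig0} splits under $S$ as $\{\zeta_+,\psi_+\}\subset\mathcal X^0_+$ and $\{\zeta_0,\zeta_-,\psi_0,\psi_-\}\subset\mathcal X^0_-$. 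Hence the algebraic multiplicity of $0$ for $L_0$ is $2$ in $\mathcal X^0_+$ and $4$ in $\mathcal X^0_-$, and Riesz projectors produce smoothly $\eps$-dependent invariant subspaces $E^\pm_\eps$ of dimensions $2$ and $4$ capturing all spectrum of $\widetilde L_\eps$ near $0$.

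On $E^-_\eps$, I would identify four independent generalized eigenvectors at nonzero $\eps$ directly from the symmetries of \eqref{eq-hydro-evo}--\eqref{eq-hydro-bc-evo}: the constant-potential direction $\zeta_0$ is annihilated by $\widetilde L_\eps$ for every $\eps$, and $x$-translation invariance yields $\partial_x U_\eps\in\ker\widetilde L_\eps$; both lie in $\mathcal X^0_-$ by the parity of $U_\eps$. Length-two Jordan chains above these two kernel elements are supplied by differentiating $F(U_\eps)=0$ with respect to $\eps$ and the wavenumber $k_\eps$, exhausting the four units of algebraic multiplicity. Thus $0$ remains an eigenvalue of $\widetilde L^-_\eps$ of algebraic multiplicity exactly four and no eigenvalue leaves the origin in $\mathcal X^0_-$.

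The core of the argument is on $E^+_\eps$: in a smoothly $\eps$-dependent basis extending $\{\zeta_+,\psi_+\}$, the restriction $\widetilde L_\eps|_{E^+_\eps}$ is represented by a $2\times 2$ matrix $M_\eps$. Reversibility $R$ forces $\mathrm{tr}\,M_\eps=0$, so $\sigma(M_\eps)=\{\pm\lambda_\eps\}$ with $\lambda_\eps^2=-\det M_\eps$. A perturbative computation using the expansions \eqref{e:expan}--\eqref{e:ck*} of the periodic solution (the bookkeeping deferred to Appendices~\ref{app-m212}--\ref{app-m212-sign}) should give $\lambda_\eps^2=-(\ell_\eps^*)^2\eps^2+O(\eps^3)$, where the coefficient $\ell_\eps^*>0$ is an explicit expression in $k_*,\alpha,\beta$ whose positivity must be verified throughout Region~I. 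This produces the simple pair $\pm\i\ell_\eps$ and, combined with the $L_0$-gap, analytic perturbation on compact subsets of $\i\R\setminus\{0\}$, and the large-$|\ell|$ bound below, delivers Parts~(\ref{lem-linop-spectral-i})--(\ref{lem-linop-spectral-ii}).

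For Part~(\ref{lem-linop-spectral-iii}), the plan is to solve $(L_0-\i\ell\,\mathbb I)U=F$ by Fourier series in $x$; each mode reduces to a constant-coefficient elliptic boundary-value problem on $(0,1)$ whose inverse has operator norm $O(|\ell|^{-1})$ as $|\ell|\to\infty$, uniformly in the mode. Since $\widetilde L_\eps-L_0$ has smooth, $\eps$-small coefficients at every order, a Neumann series transfers the estimate to $\widetilde L_\eps$ uniformly in small $\eps$, hence to $L_\eps$. The main obstacle is the second-order perturbative calculation of $\det M_\eps$ together with the uniform sign verification of $(\ell_\eps^*)^2$ across the open two-parameter Region~I; the remaining steps are standard applications of analytic perturbation theory, the $S$-decomposition, and explicit Fourier analysis of a constant-coefficient elliptic operator.
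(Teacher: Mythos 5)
Your overall strategy coincides with the paper's: conjugate to $\widetilde L_\eps$ to get an $\eps$-independent domain, isolate a six-dimensional spectral subspace near $0$ by Riesz projection, split it by the reflection $S$ into a $2\times2$ block on $\mathcal X^0_+$ and a $4\times4$ block on $\mathcal X^0_-$, use reversibility and realness to reduce the $2\times2$ block to a sign condition on a determinant of order $\eps^2$, and prove (iii) by a resolvent estimate for $L_0$ plus relative boundedness. All of that is sound and is exactly how the paper proceeds.

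There is, however, one concrete step that would fail as written: your claim that the length-two Jordan chains over $\zeta_0$ and $\partial_x U_\eps$ in $\mathcal E_{\eps,-}$ are ``supplied by differentiating $F(U_\eps)=0$ with respect to $\eps$ and the wavenumber $k_\eps$.'' Differentiating the family in $\eps$ (or in $k$) produces vectors of the form $(\dot\eta_\eps,0,\dot\Phi_\eps,0)^{\TT}$, which have vanishing $\omega$- and $\xi$-components and satisfy $S\,\partial_\eps U_\eps=\partial_\eps U_\eps$ by the parity of $(\eta_\eps,\Phi_\eps)$; they therefore lie in $\mathcal X^0_+$, not in $\mathcal X^0_-$, and moreover they solve $L_\eps\,\partial_\eps U_\eps=-\dot k_\eps\,{\D}_kF[U_\eps,k_\eps]$ rather than $L_\eps\psi=\zeta$ with $\zeta\in\Ker L_\eps$. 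The genuine generalized eigenvectors are supported in the $\omega$- and $\xi$-components and come from the residual symmetries of the \emph{three-dimensional} problem in the transverse ($z$-evolution) formulation: adding a constant transverse velocity $\phi\mapsto\phi+cz$ yields $\psi_0(\eps)=(0,-\int_0^1 y\Phi_{\eps y}\dif y,0,1+\eta_\eps)^{\TT}$ over $\zeta_0$, and the analogous oblique mode yields $\psi_-(\eps)$ over $\eps^{-1}U_{\eps x}$; the paper writes both explicitly and verifies $L_\eps\psi_0(\eps)=\zeta_0$, $L_\eps\psi_-(\eps)=\zeta_-(\eps)$. This step cannot be skipped or replaced by the kernel count alone: knowing only that $\dim\Ker L_\eps|_{\mathcal E_{\eps,-}}\ge 2$ is compatible with a spectrum $\{0,0,\lambda,-\lambda\}$, $\lambda\neq0$, for the $4\times4$ block, which would destroy both the multiplicity-four statement in (i) and the later application of the instability criterion and the Lyapunov center theorem. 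With the correct generalized eigenvectors substituted, the rest of your argument goes through as in the paper.
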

	
	\begin{proof}
		We rely on the properties of the operator $L_0$ and perturbation arguments for $\eps$ sufficiently small. The operators $\widetilde L_\eps$ and $\widetilde L_0$ having the same domain of definition $\mathcal Y^1$, standard perturbation arguments show that $\widetilde L_\eps$ is a small relatively bounded perturbation of $\widetilde L_0$ for $\eps$ sufficiently small.
		The result in item~({\ref{lem-linop-spectral-iii}}) is an immediate consequence of this property. Indeed, for $\eps=0$ the inequality from~(\ref{lem-linop-spectral-iii}) is given in \cite{Groves}, which implies that a similar inequality holds for $\widetilde L_0$, with possibly different values $C_1$ and $\ell_1$. The operator $\widetilde L_\eps$ being a relatively bounded perturbation of $\widetilde L_0$ for sufficiently small $\eps$, from the inequality for $\widetilde L_0$ we obtain that item~(\ref{lem-linop-spectral-iii}) holds for $\widetilde L_\eps$, and then for $L_\eps$. It remains to prove items~(\ref{lem-linop-spectral-i}) and~(\ref{lem-linop-spectral-ii}). This is the main part of the proof of the theorem.
		
		\paragraph{Spectral decomposition.}  The results in Section~\ref{s:L0} show that the spectrum $\sigma(L_0)$ of the linear operator $L_0$ satisfies 
		\[
		\sigma(L_0)=\{0\}\cup\sigma_1(L_0),\quad
		\sigma_1(L_0)\subset\{\lambda\in\mathbb C\,:\, |{\RE}\, \lambda|> d_1\},
		\]
		for some $d_1>0$, where $0$ is an eigenvalue with algebraic multiplicity six and geometric multiplicity three, and the same is true for the linear operator $\widetilde L_0$. The six-dimensional spectral subspace $\mathcal E_0$ associated to the eigenvalue $0$ of $L_0$ is spanned by the eigenvectors $\zeta_0,\ \zeta_{\pm}$ given in \eqref{e:eig0} and generalized eigenvectors $\psi_0,\ \psi_{\pm}$ given in \eqref{e:geneig0}. For $\eps \neq 0$ sufficiently small, $\widetilde L_\eps$ is a small relatively bounded perturbation of $\widetilde L_0$. Consequently, there exists a neighborhood $V_0\subset\mathbb C$ of the origin such that 
		\[
		V_0\subset \{\lambda\in\mathbb C\,:\, |{\RE}\,\lambda|< d_1/4\}
		\]
		and
		\[
		\sigma(\widetilde L_\eps)=\sigma_{0}(\widetilde L_\eps)\cup\sigma_{1}(\widetilde L_\eps),\quad
		\sigma_{0}(\widetilde L_\eps)\subset V_0,\quad  \sigma_{1}(\widetilde L_\eps)\subset\{\lambda\in\mathbb C\,:\, |{\RE}\,\lambda|> d_1/2\},
		\]
		for sufficiently small $\eps$,  where the spectral subspace associated to $\sigma_{0}(\widetilde L_\eps)$ is six-dimensional, and the same is true for $L_\eps$. Moreover, for the operator $L_\eps$, there exists a basis $ \{\zeta_0(\eps), \zeta_{\pm}(\eps),$ $\psi_0(\eps), \psi_{\pm}(\eps)\}$ of the six-dimensional spectral subspace $\mathcal E_\eps$ associated to  $\sigma_{0}(L_\eps)$ which is the smooth continuation, for sufficiently small $\eps$, of the basis $\{\zeta_0,\ \zeta_{\pm},\ \psi_0,\ \psi_{\pm}\}$ of the six-dimensional spectral subspace $\mathcal E_0$ associated to the eigenvalue $0$ of $L_0$. The two bases share the symmetry properties,
		\[\begin{aligned}
		&S\zeta_0(\eps) = -\zeta_0(\eps),& \quad &S\zeta_-(\eps)=-\zeta_-(\eps),&\quad &S\zeta_+(\eps)=\zeta_+(\eps),&\\
		&S\psi_0(\eps)= -\psi_0(\eps),&\quad &S\psi_-(\eps)=-\psi_-(\eps),& \quad &S\psi_+(\eps) = \psi_+(\eps).&
		\end{aligned}\]
		Thus, we have the decomposition $\mathcal E_\eps = \mathcal E_{\eps,+} \oplus \mathcal E_{\eps,-}$ with
		\[\begin{split}
		&\mathcal E_{\eps, +}=\{U \in \mathcal E_{\eps} \, : \, RU = U\}\hspace{0.3cm}=\text{span} \{\zeta_+(\eps), \psi_+(\eps)\},\\
		&\mathcal E_{\eps,-}=\{U \in \mathcal E_\eps \, : \, RU = -U\} =\text{span}\{\zeta_0(\eps), \zeta_-(\eps), \psi_0(\eps), \psi_-(\eps)\}.
		\end{split}\]
		These spaces $\mathcal E_{\eps, \pm}$ are invariant under the action of $L_\eps$.
		
		Purely imaginary eigenvalues of $L_\eps$ necessarily belong to the neighborhood $V_0$ of $0$. Therefore, they are detemined by the action of $L_\eps$ on the spectral subspace $\mathcal E_\eps$. This action is represented by a $6\times 6$ matrix. The decomposition  $\mathcal E_\eps = \mathcal E_{\eps,+} \oplus \mathcal E_{\eps,-}$ above, implies that we can further decompose the action of  $L_\eps$ by restricting to the invariant subspaces $\mathcal E_{\eps, \pm}$. In other words, the  $6\times 6$ matrix is a block matrix with a $2 \times 2$ block  representing the action of $L_\eps$ on $\mathcal E_{\eps, +}$ and a $4\times 4$ block representing the action of $L_\eps$ on $\mathcal E_{\eps, -}$. Our task is to determine the eigenvalues of these two matrices. This will prove the result in part~(\ref{lem-linop-spectral-i}) of the theorem. For the restriction of the linear operator $L_\eps$ to the invariant subspace $\mathcal X^0_+$ in part~(\ref{lem-linop-spectral-ii}) of the theorem, it is enough to consider the eigenvalues of the $2 \times 2$ matrix.
		
		\paragraph{Eigenvalues of the  $\boldsymbol{4\times 4}$ matrix.} It turns out that a basis of the subspace $\mathcal E_{\eps,-}$ can be explicitly obtained using the symmetries of the Euler equations. First, the Euler equations \eqref{eq-hydro-nondim}--\eqref{eq-hydro-bc-nondim} are invariant under the transformation $\phi \mapsto \phi + C$ for any real constant $C$. This implies that the dynamical system  \eqref{eq-hydro-evo}--\eqref{eq-hydro-bc-evo} is invariant under the transformation $U\mapsto U+\zeta_0$ where $\zeta_0=(0,0,1,0)^{\text{T}}$. Consequently, $\zeta_0$ belongs to the kernel of $L_\eps$ and since $S\zeta_0=-\zeta_0$ it belongs to $\mathcal E_-$. We choose $\zeta_0(\eps)=\zeta_0$ and then a direct computation gives the generalized eigenvector
		\[\psi_0(\eps) = \begin{pmatrix} 0 \\ -\int_0^1 y \Phi_{\eps y} \dif y \\ 0 \\ 1+\eta_\eps\end{pmatrix},\]
		satisfying $L_\eps\psi_0(\eps)=\zeta_0$ and  $S\psi_0(\eps)=-\psi_0(\eps)$.
		
		Next, the invariance of the Euler equations under horizontal spatial translations in $x$ implies that the derivative $U_{\eps x}=(\eta_{\eps x}, 0, \Phi_{\eps x}, 0)^{\text{T}}$ of the periodic wave belongs to the kernel of $L_\eps$. Since $SU_{\eps x}=-U_{\eps x}$, the vector $U_{\eps x}$ belongs to $\mathcal E_{\eps,-}$. From the expansions~\eqref{eq-2Dsol-exp}, we find that $U_{\eps x}=\eps\zeta_-+\O(\eps^2)$. This gives a second vector $\zeta_-(\eps) = \eps^{-1}U_{\eps x}$ which belongs to the kernel of $L_\eps$, and also to the invariant subspace $\mathcal E_-$, with the property that $\zeta_-(\eps)\to \zeta_-$ as $\eps \to 0$. The corresponding generalized eigenvector is given by 
		\[\psi_-(\eps) = \frac{1}{\eps}\begin{pmatrix} 0 \\ \dfrac{\eta_{\eps x}\beta}{(1+k_\eps^2 \eta_{\eps x}^2)^{1/2}} - \int_0^1 y \Phi_{\eps y} \left(\Phi_{\eps x}-\dfrac{\eta_{\eps x}y\Phi_{\eps y}}{1+\eta_{\eps}}\right) \dif y \\ 0 \\ (1+\eta_\eps)\left(\Phi_{\eps x}-\dfrac{\eta_{\eps x}y\Phi_{\eps y}}{1+\eta_\eps}\right)\end{pmatrix}.\]
		
		The above shows that there is a basis $\{\zeta_0(\eps), \psi_0(\eps), \zeta_-(\eps), \psi_-(\eps)\}$ for $\mathcal E_{\eps,-}$ satisfying
		\[
		L_\eps\zeta_0(\eps) = 0,\quad L_\eps \psi_0(\eps) = \zeta_0(\eps),\quad L_\eps \zeta_-(\eps) = 0,\quad L_\eps \psi_-(\eps) = \zeta_-(\eps).
		\]
		Thus, $0$ is the only eigenvalue of the $4\times 4$ matrix representing the action of $L_\eps$ onto $\mathcal E_{\eps,-}$ and it has geometric multiplicity two and algebraic multiplicity four.

		\paragraph{Eigenvalues of the  $\boldsymbol{2\times 2}$ matrix.} We consider a basis $\{\zeta_+(\eps), \psi_+(\eps)\}$ of the subspace $\mathcal E_{\eps,+}$ which is the smooth continuation of the basis $\{\zeta_+,\psi_+\}$ of $\mathcal E_{0,+}$, and denote by $\mathcal M(\eps)$ the $2 \times 2$ matrix representing the action of $L_\eps$ on this basis. At $\eps=0$, we have that $L_0\zeta_+=0$ and $L_0\psi_+=\zeta_+$, which implies that 
		\[
		\mathcal M(0) = \begin{pmatrix} 0 & 1 \\ 0 & 0 \end{pmatrix}.
		\]
		
		For $\eps \neq 0$, we write 
		\[
		\mathcal M(\eps) = \begin{pmatrix} m_{11}(\eps) & 1+m_{12}(\eps)\\ m_{21}(\eps) & m_{22}(\eps) \end{pmatrix}.
		\]
		The invariance of the Euler equations under horizontal spatial translations in $x$, implies that the periodic waves translated by a half-period $\pi$ are also periodic solutions. Comparing their expansions in $\eps$ with the ones of $(\eta_\eps, \Phi_\eps)$ we conclude that
		\[
		\eta_\eps(x)=\eta_{-\eps}(x+\pi), \quad \Phi_\eps(x,y)=\Phi_{-\eps}(x+\pi,y).
		\]
		Since the $2\times2$ matrices corresponding to these solutions are the same this implies that $\mathcal M(\eps) = \mathcal M(-\eps)$, and as a consequence, we have the expansion $m_{ij}(\eps)=m_{ij}^{(2)}\eps^2 + \mathcal O(\eps^4)$, for $\eps$ sufficiently small.
		
		Next, the reversibility of $L_\eps$ implies that the spectrum of $L_\eps$ is symmetric with respect to the origin in the complex plane. Moreover, because $L_\eps$ is a real operator, its spectrum is also symmetric with respect to the real line. These observations combined imply that the two eigenvalues of $\mathcal M(\eps)$ are either both real or both purely imaginary, and their sum is equal to $0$. Consequently, $m_{11}(\eps)=-m_{22}(\eps)$. Further, the product of these two eigenvalues is equal to the determinant of $\mathcal M(\eps)$. Therefore, the eigenvalues are both real if $\det \mathcal M(\eps) < 0$ and both purely imaginary if $\det \mathcal M(\eps)>0$. We have
		\[
		\det \mathcal M(\eps) = -m_{11}^2(\eps)-m_{21}(\eps)(1+m_{12}(\eps)) = -m_{21}^{(2)}\eps^2 + \mathcal O(\eps^4),\]
		so the result in theorem holds provided $m_{21}^{(2)}<0$.
		
		The final step is the computation of the sign of $m_{21}^{(2)}$. We prove in Appendix~\ref{app-m212} that
		\begin{equation}\label{eq-m212}
		m_{21}^{(2)} = - 4k_* \cdot \frac{4\beta k_* \sinh^2(k_*)+ 2k_* -\sinh(2k_*) }{4\beta k_* \sinh^2(k_*)+2k_* + \sinh(2k_*)} \cdot k_2,
		\end{equation}
		where $k_2$ is the coefficient in the expansion of the wavenumber $k_\eps$ given in \eqref{e:k2}. Replacing the formula for $k_2$ we obtain
		\[
		m_{21}^{(2)} = \frac{k_*^4}{8\alpha}\cdot \frac1{4\beta k_* \sinh^2(k_*)+2k_* + \sinh(2k_*)}\cdot \widetilde m_{21}^{(2)},
		\]
		where
		\begin{equation}\label{eq-coeff-m212}
		\begin{split}
		\widetilde m_{21}^{(2)} = & 
		-\left(9\alpha \beta +16\right)k_*  
		+ 12\alpha \beta k_*\cosh(2k_*)  - 3\alpha \beta k_* \cosh(4k_*)\\[0,5ex]
		&
		+8\alpha(2c(k_*) -1) \sinh(2k_*)+4\alpha (c(k_*)+2)\sinh(4k_*),
		\end{split}
		\end{equation}
		with $c(k_*)<-1$ given in \eqref{e:ck*}. Clearly, $m_{21}^{(2)}$ and $\widetilde m_{21}^{(2)}$ have the same sign. Proposition~\ref{prop-coeff-sign} in Appendix~\ref{app-m212-sign} shows that $\widetilde m_{21}^{(2)}<0$ for $(\alpha,\beta)$ in Region~I. This completes the proof of the theorem.
	\end{proof}
	
	\section{Transverse dynamics}
	\label{s transverse}
	
	We show that the two-dimensional periodic waves in Theorem~\ref{thm-main-II}(\ref{thm-main-II-exist}) are linearly transversely unstable for $\eps$ sufficiently small, and then discuss the induced dimension-breaking bifurcation. These two results prove the parts (\ref{thm-main-II-unstable}) and  (\ref{thm-main-II-dimbreak}) of Theorem~\ref{thm-main-II}. 
	
	Throughout this section, we consider a two-dimensional periodic wave $U_\eps$ such that the associated linearized of operator $L_\eps$ studied in Section~\ref{s linear} possesses two simple purely imaginary eigenvalues $\pm{\ii}\ell_\eps$ as in Theorem~\ref{lem-linop-spectral}, hence by fixing $\eps\in(-\eps_1,\eps_1)$. 
	
	\subsection{Transverse linear instability}
	\label{sect-transverse}
	
	Linearizing the system \eqref{eq-hydro-evo}--\eqref{eq-hydro-bc-evo} at $U_\eps$ we obtain the linear system
	\begin{equation}\label{eq-transverse}
	\frac{\dif U}{\dif z}= DU_t + \dif F[U_\eps]U,
	\end{equation}
	with boundary conditions
	\begin{equation}\label{eq-transverse-bc}
	\Phi_y = y\eta_t + B_{l\eps}(U) \quad \text{on $y=0,1$}.
	\end{equation}
	The periodic wave $U_\eps$ is transversely linearly unstable if the linear equation has a solution of the form $\exp(\sigma t) U_\sigma(z)$ with $\RE \sigma > 0$ and $U_\sigma \in C^1_{\text{b}}(\R, \mathcal X^0) \cap C_{\text{b}}(\R, \mathcal X^1)$. 
	For the construction of such a function, we closely follow the approach developed in \cite{GSW} where the authors studied the transverse instability of solitary waves for the Euler equations. The only difference is that the functions were localized in $x\in\R$ in \cite{GSW}, whereas here they are periodic in $x$. We use the following general result from \cite{GSW}, which we have slightly modified; see Remark~\ref{rem-Godey}.
	
	\begin{thm}[Theorem 1.3~\cite{GSW}] \label{thm-Godey} Consider real Banach spaces $\mathcal X$, $\mathcal Z$, $\mathcal Z_i$, $i=1,2$, and  a partial differential equation of the form
		\begin{equation}\label{eq-Godey}
		\frac{\dif U}{\dif z} = D_1 U_t + D_2 U_{tt}+ LU.
		\end{equation}
		Assume that the following properties hold: 
		\begin{enumerate}[(i)]
			\item $\mathcal Z\subset \mathcal Z_i \subset \mathcal X$, $i=1,2$,  with continuous and dense embeddings;
			\item $L,D_1$, and $D_2$ are closed linear operators in $\mathcal X$ with domains $\mathcal Z, \mathcal Z_1$, and $\mathcal Z_2$, respectively;
			\item the spectrum of $L$ contains a pair of isolated purely imaginary eigenvalues $\pm {\ii} \ell_*$ with odd multiplicity;
			\item there exists an involution $R \in \mathcal L(\mathcal X)$ which anticommutes with $L$ and $D_i$, $i=1,2$, i.e., the equation \eqref{eq-Godey} is reversible.
		\end{enumerate}
		Then, for each sufficiently small $\sigma>0$, equation~\eqref{eq-Godey} has a solution of the form $\exp(\sigma t) U_\sigma(z)$ with $\RE \sigma > 0$ and $U_\sigma \in C^1(\R, \mathcal X) \cap C(\R, \mathcal Z)$ a periodic function.  
	\end{thm}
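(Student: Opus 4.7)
The plan is to reduce the evolution equation to a perturbed spectral problem and then apply a reversibility-plus-reality symmetry argument. Seeking a solution in separated form $U(z,t) = e^{\sigma t} U_\sigma(z)$ transforms \eqref{eq-Godey} into the spectral problem
\[
\frac{\dif U_\sigma}{\dif z} = L_\sigma U_\sigma, \qquad L_\sigma := L + \sigma D_1 + \sigma^2 D_2,
\]
with $L_\sigma$ acting in $\mathcal{X}$ on the common domain $\mathcal{Z}$ (well-defined by the embeddings in~(i)). A nontrivial $z$-periodic $U_\sigma$ exists whenever $L_\sigma$ has a nonzero purely imaginary eigenvalue $\ii\ell(\sigma)$; the corresponding eigenvector produces $U(z,t) = e^{\sigma t}e^{\ii\ell(\sigma)z}U_\sigma \in C^1(\R,\mathcal{X})\cap C(\R,\mathcal{Z})$. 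The task is therefore to exhibit such an eigenvalue of $L_\sigma$ close to $\ii\ell_*$ for every sufficiently small $\sigma>0$.

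Because of~(i) together with the closedness in~(ii), the perturbation $\sigma D_1+\sigma^2 D_2$ is relatively bounded with respect to $L$ with small bound for $|\sigma|$ small. Kato's holomorphic perturbation theory then yields a small circle $\Gamma\subset\mathbb{C}$ centered at $\ii\ell_*$ enclosing no other point of $\sigma(L)$, which for every sufficiently small $\sigma$ encloses only eigenvalues of $L_\sigma$ and whose associated Riesz spectral projection $P_\sigma$ depends analytically on $\sigma$. The rank of $P_\sigma$ is constant in $\sigma$ and equals the algebraic multiplicity of $\ii\ell_*$ as an eigenvalue of $L$, which is odd by hypothesis~(iii).

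The decisive step exploits symmetry. Hypothesis~(iv) gives $R L_\sigma = -L_\sigma R$ for every real $\sigma$, so the spectrum of $L_\sigma$ is invariant under $\lambda\mapsto-\lambda$. Since $L_\sigma$ is a real operator on a real Banach space, the spectrum is also invariant under complex conjugation $\lambda\mapsto\bar\lambda$; the composition $\lambda\mapsto-\bar\lambda$ is reflection across the imaginary axis. Any disk centered on the imaginary axis, in particular our $\Gamma$, is invariant under this reflection, so the eigenvalues of $L_\sigma$ inside $\Gamma$ are permuted by $\lambda\mapsto-\bar\lambda$, and those off the imaginary axis occur in pairs of equal algebraic multiplicity. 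Since the total algebraic multiplicity inside $\Gamma$ is odd, at least one eigenvalue $\ii\ell(\sigma)$ must lie on the imaginary axis. Continuity of $P_\sigma$ forces $\ell(\sigma)\to\ell_*\neq 0$, so $\ell(\sigma)\neq 0$ for sufficiently small $\sigma$, and any associated eigenvector provides the sought periodic solution.

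The main obstacle is the functional-analytic bookkeeping needed to justify the perturbation step when $D_1,D_2$ have possibly larger domains $\mathcal{Z}_i\supset\mathcal{Z}$. This is handled by establishing, from relative boundedness, an estimate $\|(\sigma D_1+\sigma^2 D_2)(L-\lambda)^{-1}\|_{\mathcal{L}(\mathcal{X})}\leq C(|\sigma|+\sigma^2)$ uniform for $\lambda\in\Gamma$, and expanding $(L_\sigma-\lambda)^{-1}$ as a Neumann series in $\sigma$; the resulting analyticity of $P_\sigma$ provides the constancy of rank used above. With this technical point in place, the symmetry argument concludes the proof.
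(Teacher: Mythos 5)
Your argument is correct and is essentially the proof of the cited result: the paper itself does not reprove this theorem (it imports it from \cite{GSW}, with the domain generalization justified in Remark~\ref{rem-Godey} via relative boundedness, exactly as you do), and your reduction to the spectral problem for $L+\sigma D_1+\sigma^2 D_2$, the Riesz projection of constant odd rank near $\ii\ell_*$, and the reversibility-plus-reality symmetry $\lambda\mapsto-\overline{\lambda}$ forcing a persistent purely imaginary eigenvalue reproduce that proof faithfully. No gaps.
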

	
	\begin{rem}\label{rem-Godey}
		Theorem 1.3~\cite{GSW} assumes that the linear operators $L, D_1,$ and $D_2$ have the same domain of definition, while we in Theorem~\ref{thm-Godey} allow for different domains, just like in Theorem 2.1 of~\cite{Godey}. This is needed since the operators $D_1$ and $D_2$ in our application (and in fact also in~\cite{GSW}) have different domains than $L$. Note in particular that the hypotheses imply that $D_1$ and $D_2$ are relatively bounded perturbations of $L$ by Remarks 1.4 and 1.5, Chapter 4.1.1~\cite{Kato}.
	\end{rem}
	
	This general result does not directly apply to the system~\eqref{eq-transverse}--\eqref{eq-transverse-bc} because the boundary condition \eqref{eq-transverse-bc} contains the extra term $y\eta_t$ which involves a derivative with respect to $t$. We proceed as in \cite{GSW} and eliminate this term by an appropriate change of variables, similar to the one used for $L_\eps$ in Section~\ref{s linear}.
	
	We replace the variable $\Phi$ in $U$ by a new variable $\Theta= \Phi + {\theta}_{yt}$ where ${\theta}$ is the unique solution of the elliptic boundary value problem
	\[
	\begin{aligned}
	-k_{\eps}^2{\theta}_{xx}-\theta_{yy} + B_{l\eps}(0,{\theta}_y) &= y\eta & &\text{in $\Sigma$}, \\
	{\theta} & = 0 & & \text{on $y = 0,1$},
	\end{aligned}
	\]
	where we set $B_{l\eps}(\eta,\Phi)=B_{l\eps}(U)$ because $B_{l\eps}(U)$ only depends on $\eta$ and $\Phi$. In the boundary value problem for $\theta$, we regard $t$ as a parameter and assume analytic dependence on $t$. Then the mapping defined by ${Q}(\eta, \omega, \Phi, \xi)^{\TT}= (\eta, \omega, {\Theta}, \xi)^{\TT}$ is a linear isomorphism on both ${\mathcal X}^0$ and ${\mathcal X}^1$. The transformed linearized problem~\eqref{eq-transverse}--\eqref{eq-transverse-bc} for $V= (\eta, \omega, {\Theta}, \xi)^{\TT}$ is of the form
	\begin{equation} \label{eq-transverse2}
	\frac{\dif V}{\dif z} = D_1V_t + D_2 V_{tt} + L_\eps V,
	\end{equation}
	with boundary conditions
	\begin{equation}\label{eq-transverse2-bc}
	{\Theta}_y = B_{l\eps}(V) \quad \text{on $y=0,1$}.
	\end{equation}
	The two linear operators $D_1$ and $D_2$ are bounded in ${\mathcal X}^0$ and defined by
	\[D_1 \begin{pmatrix} \eta \\ \omega \\ {\Theta} \\ \xi \end{pmatrix} = \begin{pmatrix} 0 \\ {\Theta}|_{y=1} + k_\eps{\theta}_{xy}|_{y=1} - H_2(0, {\theta}_y)\\ \widehat{\theta}_y \\ -\eta -  k_\eps^2(-\eta_{\eps}\theta_{xy}+y\eta_{\eps x}\theta_{yy})_x \end{pmatrix}, \quad D_2\begin{pmatrix} \eta \\ \omega\\ {\Theta} \\ \xi \end{pmatrix} = \begin{pmatrix} 0 \\ -{\theta}_y|_{y=1} \\ 0 \\ 0 \end{pmatrix},\]
	where $\widehat{\theta}$ is the unique solution of elliptic boundary value problem
	\[\begin{aligned}
	-k_\eps^2 \widehat{\theta}_{xx}-\widehat{\theta}_{yy} + B_{l\eps}(0, \widehat{\theta}_y)  &= y \left(\frac{\omega}{\beta}+H_1(\omega, \xi)\right) & & \text{in $\Sigma$},
	\\
	\hat{\theta}  &= 0 & &\text{on $y=0,1$}.
	\end{aligned}\]
	We use the system \eqref{eq-transverse2}--\eqref{eq-transverse2-bc} and the result in Theorem~\ref{thm-Godey} to prove the transverse linear instability of the periodic wave $U_\eps$.
	
	\begin{proof}[Proof of Theorem~\ref{thm-main-II}(\ref{thm-main-II-unstable})] We apply Theorem~\ref{thm-Godey} to the equation~\eqref{eq-transverse2} with Hilbert spaces $\mathcal X=\mathcal Z_i ={\mathcal X}^0$, $\mathcal Z={\mathcal Y}^1_\eps$, $i=1,2$, operators $D_1,D_2$ defined as above, and $L_{\eps}$. Since $D_1$ and $D_2$ are bounded on $\mathcal X^0$, they are closed operators in $\mathcal X^0$. The first two hypotheses {\it (i)} and {\it (ii)} are satisfied.
		The spectral condition {\it (iii)} is verified by Theorem~\ref{lem-linop-spectral}(\ref{lem-linop-spectral-i}). The reverser is $R$ defined in Section~\ref{sect-spatial-formulation} and its anti-commutativity with $D_1,D_2$ and $L$ is preserved by the change of variables $Q$. Thus, equation~\eqref{eq-transverse2} is reversible. Theorem~\ref{thm-Godey} now gives the statement of Theorem~\ref{thm-main-II}(\ref{thm-main-II-unstable}).\end{proof}

	\subsection{Dimension-breaking bifurcation}
	\label{sect-dimbreak}
	
	We look for three-dimensional steady solutions of the system~\eqref{eq-hydro-evo}--\eqref{eq-hydro-bc-evo} which bifurcate from the transversely unstable periodic wave $U_\eps$. Taking
	\[U(x, y, z) = U_\eps(x, y) + \widetilde U (x, y, z), \quad \widetilde U =(\eta,\omega,\Phi,\xi)^{\TT},\]
	in \eqref{eq-hydro-evo}--\eqref{eq-hydro-bc-evo} we obtain the equation
	\begin{equation}\label{eq-dimbreak}
	\frac{\dif \widetilde U }{\dif z} = F(U_\eps + \widetilde U ),
	\end{equation}
	together with the boundary conditions
	\begin{equation}
	\label{eq-dimbreak-bc}
	\Phi_y = B(U_\eps+\widetilde U )-B(U_\eps), \quad \text{on $y=0,1$}.
	\end{equation}
	The mappings $F$ and $B$ are defined on an open neighborhood $M$ of $0 \in \mathcal X^1$ which is contained in the set
	\[
	\{(\eta, \omega, \Phi, \xi)^{\TT} \in \mathcal X^1 \, : \, |W(x)| < \beta, \eta(x) > -1 \ \text{for all $x \in \R$}\},
	\]
	and are analytic. The periodic wave $U_\eps$ belongs to $M$, for sufficiently small $\eps$, and we look for bounded solutions $\widetilde U$ such that $U_\eps+\widetilde U(z)\in M$, for all $z\in\R$. 
	
	General bifurcation results cannot be directly applied to this system because the boundary condition~\eqref{eq-dimbreak-bc} is nonlinear. We make a nonlinear change of variables which transforms these nonlinear boundary conditions into linear boundary conditions. Similarly to our previous changes of variables from Section~\ref{s Leps} and Section~\ref{sect-transverse}, we replace $\Phi$ by a new variable $\Theta = \Phi + \theta_y$ where $\theta$ is the unique solution of the elliptic boundary value problem 
	\[\begin{aligned}
	-k_\eps^2\theta_{xx}-\theta_{yy}+B_{l\eps}(0,\theta_y)&=B(U)& & \text{in $\Sigma$},\\
	\theta &= 0& &\text{on $y=0,1$}.
	\end{aligned}\]
	and define $Q(\eta, \omega, \Phi, \xi)^{\TT}=(\eta, \omega, \Theta, \xi)^{\TT}$. Using the method from \cite{GHS} (see also \cite{GSW}) one can show that $Q$ is a near-identity analytic diffeomorphism from a neighborhood $M_1$ of $0 \in \mathcal X^1$ onto possibly a different neighborhood $M_2$ of $0 \in \mathcal X^1$ and that for each $U \in M_1$, the linear operator $\dif Q[U]\colon \mathcal X^1 \to \mathcal X^1$ extends to an isomorphism $\widehat{\dif Q}[U]\colon \mathcal X^0 \to \mathcal X^0$ which depends analytically on $U$ and the same holds for the inverse $\widehat{\dif Q}[U]^{-1}$. Then the equation \eqref{eq-dimbreak} is transformed into
	\begin{equation}\label{eq-dimbreak2}
	\frac{\dif V}{\dif z} = L_\eps V + N(V),
	\end{equation}
	where
	\[
	N\coloneqq \widetilde{F}-L_\eps,\quad
	\widetilde{F}(V) = \widehat{\dif Q}[Q^{-1}(V)](F(U_\eps+Q^{-1}(V))),
	\]
	and the boundary condition~\eqref{eq-dimbreak-bc} becomes linear,
	\begin{equation*}
	\Theta_y = B_{l\eps}(V)\quad \text{on $y=0,1$}.
	\end{equation*}
	In particular, we recover the linear operator $L_\eps$ studied in Section~\ref{s linear}, and we can apply the Lyapunov center theorem to conclude.
	
	\begin{proof}[Proof of Theorem~\ref{thm-main-III}(\ref{thm-main-III-dimbreak})]
		The equation \eqref{eq-dimbreak2} is a dynamical system in the phase space $\mathcal X^0$ with vector field defined in a neighborhood of $0$ in $\mathcal Y^1_\eps$. Because the change of variables $Q$ preserves reversibility and reflection symmetries, the vector field in \eqref{eq-dimbreak2} anti-commutes with the reverser $R$ and commutes with the reflection $S$. Consequently, the system \eqref{eq-dimbreak2} is reversible with reverser $R$ and the reflection symmetry $S$ implies that the subspace $\mathcal X_+^0$ given in \eqref{e:X0pm} is invariant. Taking $\mathcal X=\mathcal X^0_+$ and $\mathcal Y=\mathcal Y^1_\eps\cap \mathcal X^0_+$ the results in Theorem~\ref{lem-linop-spectral} imply that the hypotheses of Theorem~\ref{app-thm-Lyapunov} hold, for $\eps$ sufficiently small. This proves Theorem~\ref{thm-main-II}(\ref{thm-main-II-dimbreak}). 
	\end{proof}

	\section{Parameter Region~II}
	\label{sect-reg-III}
	
	The analysis done for $(\alpha,\beta)$ in Region~I can be easily transferred to the parameter Region~II. However, the final result is different because the linear dispersion relation \eqref{eq-lindisrel} possesses two positive roots for $(\alpha,\beta)$ in this parameter region. We point out the differences and then state the main result for this parameter region.
	
	Denote by $k_{*,1}$ and $k_{*,2}$ the two positive roots of the dispersion relation. Take $k_{*,1}<k_{*,2}$ and assume that $k_{*,2}/k_{*,1} \notin \ZZ$.
	
	First, the existence of two-dimensional periodic waves is proved in the same way, with the difference that we now find two geometrically distinct families of two-dimensional periodic waves $\{(\eta_{\eps, 1}(X), \phi_{\eps, 1}(X, Y))\}_{\eps \in (-\eps_0, \eps_0)}$ and $\{(\eta_{\eps, 2}(X), \phi_{\eps, 2}(X, Y))\}_{\eps \in (-\eps_0, \eps_0)}$ with wavenumbers $k_{\eps,1} = k_{*,1} + \O(\eps^2)$ and $k_{\eps,2}=k_{*,2} + \O(\eps^2)$, respectively. The expansions \eqref{eq-2Dsol-exp} remain valid with $k_*$ replaced by $k_{*,1}$ for the first family and by $k_{*,2}$ for the second family, and this is also the case for all other symbolic computations.
	
	Next, the analysis of the linear operator $L_\eps$ given in Section~\ref{s linear} stays the same until the last step of the proof of Theorem~\ref{lem-linop-spectral} which consists in showing that $m_{21}^{(2)}$ is negative. The formula for $m_{21}^{(2)}$ is the same, but the result is different for the first family of periodic waves. The analysis in Appendix~\ref{app-m212-sign} gives the conclusion that $m_{21}^{(2)}$ is negative for the second family of periodic waves, whereas for the first family of periodic waves it is negative only when  $2k_{*,1} > k_{*,2}$.  This condition is satisfied if and only if $(\alpha, \beta)$ belongs to the open region between $\Gamma_2$ and $\Gamma$ in Figure~\ref{fig-lindis}.
	
	Consequently, the two-dimensional periodic waves $(\eta_{\eps, 2}(X), \phi_{\eps, 2}(X,Y))$ are transversely linearly unstable, whereas the periodic waves $(\eta_{\eps, 1}(X), \phi_{\eps, 1}(X, Y))$ are unstable if $(\alpha, \beta)$ lies between $\Gamma_2$ and $\Gamma$. Notice that our approach does not allow us to conclude on stability because the general criterion in Theorem~\ref{thm-Godey} only provides sufficient conditions for instability. Finally, the dimension-breaking result holds for all linearly transversely waves. 
	
	We summarize these results in the following theorem.
	
	\begin{thm}[Region~II]\label{thm-main-III} Fix $(\alpha,\beta)$ in Region~II and let $k_{*,1}, k_{*,2}$ be the two positive roots of the dispersion relation~\eqref{eq-lindisrel}. Assume that $k_{*,1}< k_{*,2}$ and $k_{*,2}/k_{*,1} \notin \ZZ$. Denote by $\Gamma_2$ the $(\alpha,\beta)$-parameter curve for which $2k_{*,1} = k_{*,2}$.
		\begin{enumerate}[(i)]
			\item \label{thm-main-III-exist} (Existence) There exist $\eps_0 > 0$ and two geometrically distinct families of two-dimensional steady periodic waves 
			\[\begin{split}
			&\{(\eta_{\eps,1}(X), \phi_{\eps,1}(X, Y))\}_{\eps \in (-\eps_0, \eps_0)} \quad \text{and} \quad \{(\eta_{\eps,2}(X), \phi_{\eps, 2}(X, Y))\}_{\eps \in (-\eps_0, \eps_0)}
			\end{split}\]
			to the equations~\eqref{eq-hydro-nondim}--\eqref{eq-hydro-bc-nondim}, such that $(\eta_{0,i},\phi_{0,i}) =(0, 0)$ and $(\eta_{\eps, i}, \phi_{\eps, i})$ are periodic in $X$ with wavenumbers $k_{\eps, i} = k_{*,i}+ \O(\eps^2)$ for $i = 1,2$.
			\item \label{thm-main-III-unstable} (Transverse instability) There exists $\varepsilon_1>0$ such that for each $\eps\in(-\eps_1,\eps_1)$ the periodic solution $(\eta_{\eps, 2}, \phi_{\eps, 2})$ is transversely linearly unstable. The solution $(\eta_{\eps, 1}, \phi_{\eps, 1})$ is transversely linearly unstable if $\,2k_{*,1} > k_{*,2}$, which occurs for $(\alpha, \beta)$ in the open region between the curves $\Gamma_2$ and $\Gamma$.
			\item \label{thm-main-III-dimbreak} (Dimension-breaking bifurcation)  There exists $\varepsilon_2>0$ such that for each  transversely linearly unstable wave $(\eta_{\eps, i}, \phi_{\eps,i})$ with $\eps\in(-\eps_2,\eps_2)$, $i=1,2$, there exist $\delta_\eps>0$, $\ell_{\eps,i}^*>0$, and a family of three-dimensional doubly periodic waves $\{(\eta_{\eps,i}^\delta(X, z),$ $\phi_{\eps,i}^\delta(X, Y, z))\}_{\delta \in (-\delta_{\eps,i}, \delta_{\eps,i})}$, with wavenumber $k_{*,i}$ in $X$ and wavenumber $\ell_\delta=\ell_{\eps,i}^*+\O(\delta^2)$ in $z$, bifurcating from the periodic solution  $(\eta_{\eps, i}, \phi_{\eps,i})$.
		\end{enumerate}
	\end{thm}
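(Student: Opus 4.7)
The plan is to carry out the entire program of Sections~\ref{s prelim}--\ref{s transverse} in parallel for each of the two candidate wavenumbers $k_{*,1}$ and $k_{*,2}$, and then to isolate the single step --- the sign of $m_{21}^{(2)}$ --- whose outcome differs between the two families and thereby produces the asymmetric conclusion of part~(\ref{thm-main-III-unstable}).

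For part~(\ref{thm-main-III-exist}), I would formulate the two-dimensional steady Euler equations as the reversible dynamical system~\eqref{e:2D} recalled in Section~\ref{sect-2D-exist}. In Region~II the linearization at the trivial state has two pairs of simple purely imaginary eigenvalues $\pm{\ii}k_{*,1}$ and $\pm{\ii}k_{*,2}$, and the non-resonance assumption $k_{*,2}/k_{*,1}\notin\ZZ$ ensures that neither pair is an integer multiple of the other. Applying the Lyapunov center theorem (Theorem~\ref{app-thm-Lyapunov}) separately at each pair yields the two geometrically distinct families $\{(\eta_{\eps,j},\phi_{\eps,j})\}_{\eps\in(-\eps_0,\eps_0)}$ with wavenumbers $k_{\eps,j}=k_{*,j}+\O(\eps^2)$, $j=1,2$. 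The formal expansions~\eqref{eq-2Dsol-exp}--\eqref{e:ck*} transfer verbatim with $k_*$ replaced by $k_{*,j}$; the non-resonance hypothesis is precisely what guarantees that the second harmonic $2k_{*,j}$ does not coincide with either dispersion-relation root, so that $\mathcal D(2k_{*,j})\neq 0$ and the formulas for $\eta_2,\Phi_2,k_2$ remain well-defined.

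For the spectral input needed in parts~(\ref{thm-main-III-unstable}) and~(\ref{thm-main-III-dimbreak}), I would repeat the proof of Theorem~\ref{lem-linop-spectral} verbatim for the operator $L_{\eps,j}$ associated with $U_{\eps,j}$. The subspace $\mathcal E_{\eps,-}$ is again four-dimensional with the same Jordan structure: the potential gauge $\phi\mapsto\phi+C$ furnishes $\zeta_0$ and its generalized partner $\psi_0(\eps)$, while $x$-translation furnishes $\zeta_-(\eps)=\eps^{-1}U_{\eps,j,x}$ together with $\psi_-(\eps)$. Only the $2\times 2$ block $\mathcal M(\eps)$ on $\mathcal E_{\eps,+}$ is sensitive to the choice of family, and the derivation leading to~\eqref{eq-m212} is unchanged once $k_*,k_2$ are replaced by $k_{*,j},k_{2,j}$. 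The main obstacle is therefore the sign analysis of $\widetilde m_{21}^{(2)}$ from~\eqref{eq-coeff-m212}, in which the crucial factor $c(k_{*,j})$ is controlled by $\mathcal D(2k_{*,j})$. For $j=2$ the value $2k_{*,2}$ lies beyond both roots of $\mathcal D$, so $\mathcal D(2k_{*,2})>0$ and the Region~I-type argument of Proposition~\ref{prop-coeff-sign} forces $\widetilde m_{21}^{(2)}<0$; for $j=1$ the sign of $\mathcal D(2k_{*,1})$ flips according to whether $2k_{*,1}$ lies below or above $k_{*,2}$, and only the case $2k_{*,1}>k_{*,2}$ delivers $\widetilde m_{21}^{(2)}<0$. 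Translating this inequality through the parametric curve $\Gamma_2$ yields exactly the open region between $\Gamma$ and $\Gamma_2$ stated in the theorem.

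Once the spectral picture of $L_{\eps,j}$ is established --- a pair of simple nonzero purely imaginary eigenvalues $\pm{\ii}\ell_{\eps,j}^*$ together with a semisimple zero on the complementary part --- parts~(\ref{thm-main-III-unstable}) and~(\ref{thm-main-III-dimbreak}) follow from Section~\ref{s transverse} without modification. For transverse instability, the change of variables that eliminates $y\eta_t$ from the boundary condition casts the linearized transverse problem in the form~\eqref{eq-transverse2}, and Theorem~\ref{thm-Godey} produces the unstable mode $\e^{\sigma t}U_\sigma(z)$ with $\RE\sigma>0$. For the dimension-breaking bifurcation, the near-identity analytic diffeomorphism $Q$ converts~\eqref{eq-dimbreak}--\eqref{eq-dimbreak-bc} into the linear-boundary system~\eqref{eq-dimbreak2} on the invariant subspace $\mathcal X^0_+$, and Theorem~\ref{app-thm-Lyapunov} applied to $L_{\eps,j}|_{\mathcal X^0_+}$ yields the doubly periodic family with transverse wavenumber $\ell_\delta=\ell_{\eps,j}^*+\O(\delta^2)$.
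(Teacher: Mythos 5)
Your proposal is correct and follows essentially the same route as the paper: Section~\ref{sect-reg-III} likewise transfers the Region~I program wholesale to each family, applies the Lyapunov center theorem at each pair $\pm{\ii}k_{*,j}$ (with $k_{*,2}/k_{*,1}\notin\NN$ supplying the non-resonance hypothesis for the $k_{*,1}$-family), and isolates the sign of $m_{21}^{(2)}$ --- equivalently of $\mathcal D(2k_{*,j})$, as worked out in Appendix~\ref{app-m212-sign} --- as the only step that distinguishes the two families and produces the condition $2k_{*,1}>k_{*,2}$, i.e.\ the region between $\Gamma$ and $\Gamma_2$. No substantive differences to report.
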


	\subsection*{Acknowledgement}
	M.H. gratefully acknowledges support and generous hospitality by Lund University through the Hedda Andersson visiting chair.
	M.H. was partially supported by the project Optimal (ANR-20-CE30-0004) and the EUR EIPHI program (ANR-17-EURE-0002).
	T.T. and E.W. gratefully acknowledge the support by the Swedish Research Council, grant no.~2016--04999.

	\appendix
	
	\section{Lyapunov center theorem}
	\label{app-Lyapunov}
	
	We state a non-resonant version of the Lyapunov center theorem for reversible systems which is a particular case of the more general version from~\cite{Bagri}.
	\begin{thm}
		\label{app-thm-Lyapunov} Let $\mathcal X$ and $\mathcal Y$ be real Banach spaces such that $\mathcal Y$ is continuously embedded in $\mathcal X$. Consider the evolutionary equation
		\begin{equation}\label{eq-app-abstract-evo}
		\frac{\dif U}{\dif t} = F(U),
		\end{equation}
		where $F  \in \mathscr C^6(\mathcal U, \mathcal X)$  with $\mathcal U \subset \mathcal Y$ a neighborhood of $\,0$. Assume that $F(0)=0$ and that the following properties hold:
		\begin{enumerate}[(i)]
			\item there exists an involution $R\in\mathcal L(\mathcal X)\cap\mathcal L(\mathcal Y)$ which anticommutes with $F$, i.e., the equation \eqref{eq-app-abstract-evo} is reversible;
			\item the linear operator $L \coloneqq {\dif}F[0]$ possesses a pair of simple eigenvalues  $\pm{\ii}\omega_0$ with $\omega_0 > 0$;
			\item for each $n \in \ZZ \setminus \{-1, 1\}$,  ${\ii}n\omega_0$ belongs to the resolvent set of $L$;
			\item there exists a positive constant $C$ such that
			\begin{equation}\label{e:resolvent}
			\|(L-{\ii}n\omega_0 \mathbb I )^{-1}\|_{\mathcal L(\mathcal X, \mathcal X)} \leq \frac{C}{|n|},\quad
			\|(L-{\ii}n \omega_0 \mathbb I)^{-1}\|_{\mathcal L(\mathcal X, \mathcal Y)} \leq C,
			\end{equation}
			as $n \to \infty$.
		\end{enumerate}
		Then, there exists a neighborhood $E\subset \R$ of $\, 0$ and a $\mathscr C^4$-curve $\{U(\eps),\omega(\eps)\}_{\eps \in E}$ where $U(\eps)$ is a real periodic solution to~\eqref{eq-app-abstract-evo} with period $2\pi/\omega(\eps)$. Furthermore, $(U(0),\omega(0))=(0,\omega_0)$.
	\end{thm}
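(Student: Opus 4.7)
The plan is to perform a Lyapunov--Schmidt reduction after rescaling time to fix the period, working in the subspace of $R$-reversible periodic functions, so that the reversibility assumption (i) collapses both the kernel and cokernel of the linearization to a single real dimension. First I would set $\tau = \omega t$ and look for $2\pi$-periodic solutions $U(\tau)$ of $\omega\, \dif U/\dif\tau = F(U)$, which correspond to solutions of \eqref{eq-app-abstract-evo} with period $2\pi/\omega$. Introduce the Banach spaces $\mathcal{H}^s(\mathcal X) \coloneqq H^s(\R/2\pi\ZZ; \mathcal X)$ and $\mathcal{H}^s(\mathcal Y)$ defined analogously, together with the involution $(\mathcal{R}U)(\tau) \coloneqq R\, U(-\tau)$. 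A direct computation shows that both $\partial_\tau$ and $F$ anticommute with $\mathcal{R}$, so that the nonlinear map $G(U, \omega) \coloneqq \omega\, \partial_\tau U - F(U)$ sends the $\mathcal{R}$-symmetric subspace $\mathcal{H}^1_R(\mathcal Y) \coloneqq \{U : \mathcal{R}U = U\}$ into the $\mathcal{R}$-antisymmetric subspace $\mathcal{H}^0_{-R}(\mathcal X) \coloneqq \{W : \mathcal{R}W = -W\}$, a feature essential for the reduction.

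Next I would analyze the linearization $\mathcal{L}_0 \coloneqq D_U G(0, \omega_0) = \omega_0\, \partial_\tau - L$ viewed as an operator $\mathcal{H}^1_R(\mathcal Y) \to \mathcal{H}^0_{-R}(\mathcal X)$ by Fourier decomposition in $\tau$. On the $n$-th Fourier mode, $\mathcal{L}_0$ acts as ${\ii}n\omega_0 - L$. Assumption (iii) grants invertibility for $n \neq \pm 1$, and the two bounds in (iv) let me assemble a bounded right inverse on the subspace where the modes $n = \pm 1$ are absent: the $C/|n|$ estimate in $\mathcal L(\mathcal X)$ ensures that the formal Fourier inverse converges in $\mathcal{H}^0(\mathcal X)$, while the uniform $\mathcal L(\mathcal X, \mathcal Y)$ estimate ensures the image lies in $\mathcal{H}^1(\mathcal Y)$. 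For $n = \pm 1$, pick an eigenvector $\zeta \in \mathcal Y_{\CC}$ with $L\zeta = {\ii}\omega_0 \zeta$; since $LR\zeta = -RL\zeta = -{\ii}\omega_0 R\zeta$ and $R^2 = \Id$, the simplicity of $\pm{\ii}\omega_0$ allows the normalization $R\zeta = \bar\zeta$. The two kernel vectors $u_0(\tau) \coloneqq e^{{\ii}\tau}\zeta + e^{-{\ii}\tau}\bar\zeta$ and ${\ii}(e^{{\ii}\tau}\zeta - e^{-{\ii}\tau}\bar\zeta)$ are respectively $\mathcal{R}$-symmetric and $\mathcal{R}$-antisymmetric, so the kernel of $\mathcal{L}_0$ inside $\mathcal H^1_R(\mathcal Y)$ is $\Span\{u_0\}$. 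A parallel argument with the adjoint eigenvector $\zeta^*$ of $L^*$ at $-{\ii}\omega_0$ shows that the cokernel in $\mathcal H^0_{-R}(\mathcal X)$ is also one-dimensional, and pairing against $\zeta^*$ gives the explicit projection $P$.

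The Lyapunov--Schmidt reduction then proceeds in the standard way. Write $U = \varepsilon u_0 + V$ with $V$ in a topological complement of $\Span\{u_0\}$ in $\mathcal{H}^1_R(\mathcal Y)$, and set $\omega = \omega_0 + \mu$. The range equation $(\Id - P)\, G(\varepsilon u_0 + V, \omega_0 + \mu) = 0$ satisfies the hypotheses of the implicit function theorem in $V$ at the origin, since its $V$-linearization is the bounded-invertible restriction of $\mathcal L_0$ built in the previous step; the hypothesis $F \in \mathscr C^6$ then yields $V = V(\varepsilon, \mu)$ with $V(0, \mu) \equiv 0$ and sufficient regularity. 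Substituting back gives the scalar bifurcation equation
\[
\Phi(\varepsilon, \mu) \coloneqq P\, G\bigl(\varepsilon u_0 + V(\varepsilon, \mu), \omega_0 + \mu\bigr) = 0.
\]
Since the trivial branch is a solution, $\Phi(0, \mu) \equiv 0$, so Hadamard's lemma yields $\Phi(\varepsilon, \mu) = \varepsilon \Psi(\varepsilon, \mu)$ with $\Psi \in \mathscr C^4$. A short computation based on $\partial_\varepsilon V(0, 0) = 0$ and $\mathcal L_0 u_0 = 0$ gives $\Psi(0, 0) = P[\mathcal L_0 u_0] = 0$ and $\partial_\mu \Psi(0, 0) = P[\partial_\tau u_0] = P\bigl[{\ii} e^{{\ii}\tau}\zeta - {\ii} e^{-{\ii}\tau}\bar\zeta\bigr]$, which is a nonzero multiple of the duality pairing $\langle \zeta, \zeta^*\rangle$ and hence nonzero by the simplicity of the eigenvalue. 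The implicit function theorem applied to $\Psi = 0$ produces the desired $\mathscr C^4$ curve $\mu = \mu(\varepsilon)$ with $\mu(0) = 0$, and setting $U(\varepsilon) = \varepsilon u_0 + V(\varepsilon, \mu(\varepsilon))$, $\omega(\varepsilon) = \omega_0 + \mu(\varepsilon)$ completes the proof.

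The main obstacle will be the linear step of assembling the bounded inverse of $\mathcal L_0$ on the complement of its kernel. The two bounds in (iv) play distinct roles and must be combined carefully: the $C/|n|$ estimate is exactly what is needed to make the Fourier series $\sum_{n\neq\pm 1} e^{{\ii}n\tau}({\ii}n\omega_0 - L)^{-1} W_n$ converge in $\mathcal H^0(\mathcal X)$, and the uniform estimate in $\mathcal L(\mathcal X, \mathcal Y)$ is what upgrades the result to $\mathcal H^1(\mathcal Y)$; moreover, one must verify that this construction is compatible with the decomposition into $\mathcal R$-symmetric and $\mathcal R$-antisymmetric subspaces. A secondary technicality is bookkeeping of regularity: extracting a $\mathscr C^4$ solution curve from the $\mathscr C^6$ hypothesis on $F$ accounts for the derivative losses incurred in the Hadamard factorization together with the two applications of the implicit function theorem.
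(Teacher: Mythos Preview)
The paper does not actually prove this theorem: it is stated in Appendix~A as a non-resonant special case of a more general result from the reference cited there, with only a remark that the regularity hypothesis has been strengthened from $\mathscr C^3$ to $\mathscr C^6$ in order to obtain a $\mathscr C^4$ solution curve. So there is no paper proof to compare against.

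Your outline is the standard Lyapunov--Schmidt argument for the reversible Lyapunov center theorem and is essentially correct. One small point worth tightening: with only the two resolvent bounds in hypothesis~(iv) you will not get a bounded inverse from $\mathcal H^0(\mathcal X)$ into $\mathcal H^1(\mathcal Y)$, since the uniform $\mathcal L(\mathcal X,\mathcal Y)$ estimate yields $\mathcal H^0(\mathcal Y)$ regularity and the $C/|n|$ estimate yields $\mathcal H^1(\mathcal X)$ regularity, but not the two simultaneously. The natural domain for $G$ is therefore $\mathcal H^1(\mathcal X)\cap \mathcal H^0(\mathcal Y)$ (with the intersection norm) rather than $\mathcal H^1(\mathcal Y)$; on that space the Fourier construction you describe does assemble a bounded right inverse, and the superposition map $U\mapsto F\circ U$ is still $\mathscr C^6$ because $\mathcal H^1(\mathcal X)\cap \mathcal H^0(\mathcal Y)\hookrightarrow C^0(\mathcal Y)$. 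With this adjustment the rest of the argument---the one-dimensional kernel and cokernel in the reversible subspaces, the Hadamard factorisation $\Phi=\varepsilon\Psi$, and the nonvanishing of $\partial_\mu\Psi(0,0)$ via the simplicity of $\pm\ii\omega_0$---goes through exactly as you wrote.
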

	
	In the version of the above theorem from~\cite{Bagri} the curve $\{U(\eps),\omega(\eps)\}_{\eps \in E}$ was only continuously differentiable and the vector field $F$ was of class $\mathscr C^3$. For our purposes we need at least a $\mathscr C^4$-dependence on $\eps$ and we therefore assume that $F$ is of class $\mathscr C^6$.
	
	\section{Expansion of the two-dimensional periodic waves}
	\label{app-2D-calc}
	
	For this computation it is more convenient to use the original system~\eqref{eq-hydro-nondim}--\eqref{eq-hydro-bc-nondim} instead of the dynamical system~\eqref{eq-hydro-evo}. Restricting to two-dimensional steady solutions, we make the change of variables
	\[
	X=kx,\quad Y=y(1+\eta(X)),\quad \eta(X)=\widetilde\eta(x),\quad\phi(X,Y) = \widetilde\Phi(x, y). 
	\]
	Dropping the tildes we obtain the equations
	\begin{equation} \label{eq-Laplaceflat} \begin{split}
	k^2 \Phi_{xx}& + \frac{1}{(1+\eta)^2}\Phi_{yy} - 2k^2 \frac{y \eta_x}{1+\eta} \Phi_{xy} \\
	& + k^2 \left(\frac{y \eta_x}{1+\eta}\right)^2 \Phi_{yy} + k^2 \left(\frac{2y\eta_x^2}{(1+\eta)^2}-\frac{y\eta_{xx}}{1+\eta}\right) \Phi_y = 0,
	\end{split}
	\end{equation}
	for $0 < y < 1$ with boundary conditions
	\begin{equation} \begin{aligned}
	\label{eq-bcflat}
	\Phi_y &= 0& &\text{on $y=0$},&\\
	\Phi_y &= (1+\eta)(-k \eta_x+ k^2\eta_x \Phi_x)- k ^2 \eta_x^2 \Phi_y& &\text{on $y=1$,}&\\
	\alpha \eta& - k  \left(\Phi_x - \frac{\eta_x}{1+\eta}\Phi_y \right) - \beta k^2 \left(\frac{\eta_x}{(1+k^2\eta_x^2)^{1/2}}\right)_x& && \\
	& + \frac{1}{2}\left(k^2\left(\Phi_x - \frac{\eta_x}{1+\eta} \Phi_y\right)^2 + \frac{\Phi_y^2}{(1+\eta)^2}\right) = 0& \quad &\text{on $y=1$}.&
	\end{aligned}
	\end{equation}
	The scaled periodic wave $(\widetilde\eta_\eps(x),\widetilde\Phi_\eps(x,y))$ satisfies these equations for $k=k_\eps$. We insert the expansions~\eqref{eq-2Dsol-exp} into equations~\eqref{eq-Laplaceflat}--\eqref{eq-bcflat} and expand the resulting equations in $\eps$. We restrict to solutions with $\widetilde\eta_\eps$ even in $x$ and $\widetilde\Phi_\eps$ odd in $x$.
	
	At order $\O(\eps)$ we find the following equations for $\eta_1$ and $\Phi_1$:
	\begin{equation}\label{eq-order-eps}
	\begin{aligned}
	&k_*^2 \Phi_{1xx} + \Phi_{1yy} = 0&\quad  &\text{for $0 < y < 1$},&\\
	&\Phi_{1y}= 0& \quad &\text{on $y=0$},&\\
	&\Phi_{1y}=- k_*\eta_{1x} &\quad  &\text{on $y=1$}, & \\
	&\alpha \eta_1 - k_* \Phi_{1x}|_{y=1} - \beta k_*^2 \eta_{1xx} = 0&\quad  & \text{on $y=1$}.&
	\end{aligned}
	\end{equation}
	Taking Fourier series in $x$,
	\[
	\eta_1(x) = \sum_{n=0}^\infty \eta_{1n} \cos(nx), \quad \Phi_1(x,y) = \sum_{n=1}^\infty \phi_{1n}(y)\sin(nx),
	\]
	we obtain the solvability condition $\mathcal D(nk_*)=0$ where $\mathcal D$ is the linear dispersion relation in~\eqref{eq-lindisrel}. Consequently, $n=0$ which gives solutions which are constant in $x$, only, and $n=\pm 1$ which gives the formulas for $\eta_1$ and $\Phi_1$ in \eqref{e:expan}.
	
	Next, at order $\O(\eps^2)$ we obtain the equations for $\eta_2$ and $\Phi_2$,
	\begin{equation}
	\begin{aligned}
	&k_*^2 \Phi_{2xx} + \Phi_{2yy} =2\eta_1 \Phi_{1yy} + 2k_*^2 y \eta_{1x} \Phi_{1xy} + k^2_* y \eta_{1xx} \Phi_{1y}& \quad &\text{for $0 < y < 1$},& \\
	&\Phi_{2y} =0 &\quad  &\text{on $y=0$},&\\
	&\Phi_{2y}+k_*\eta_{2x}=  k^2_* \eta_{1x} \Phi_{1x}-k_*\eta_1 \eta_{1x}& &\text{on $y=1$},&\\
	&\alpha \eta_2 - \beta k_*^2 \eta_{2xx} - k_* \Phi_{2x} = {-}k_* \eta_{1x} \Phi_{1y} - \tfrac{1}{2} (k_*^2 \Phi_{1x}^2 + \Phi_{1y}^2)& \quad &\text{on $y=1$}.&
	\end{aligned}
	\end{equation}
	Inserting the explicit formulas for $\eta_1$ and $\Phi_1$ we obtain 
	\begin{equation}
	\begin{aligned}
	&k_*^2 \Phi_{2xx} + \Phi_{2yy} =k_*^2 \sinh(k_*)\sin(2x)\left(\cosh(k_*y)-\tfrac{3}{2}k_* y \sinh(k_*y)\right)&  &\text{for $0 < y < 1$},& \\
	&\Phi_{2y} =0 & &\text{on $y=0$},&\\
	&\Phi_{2y}+k_*\eta_{2x} = \left(\tfrac{k_*}{2}\sinh^2(k_*)-\tfrac{k_*^2}{2} \sinh(k_*)\cosh(k_*)\right) \sin(2x)& &\text{on $y=1$},&\\
	&\alpha \eta_2 - \beta k_*^2 \eta_{2xx} - k_* \Phi_{2x} = {-}\tfrac{k_*^2}{4}\cosh(2k_*)\cos(2x)-\tfrac{k_*^2}{4}& &\text{on $y=1$}.&
	\end{aligned}
	\end{equation}
	Observing that the right-hand side only involves the second Fourier mode, we find the formulas for $\eta_2$ and $\Phi_2$ in \eqref{e:expan}.  
	
	Finally, the coefficient $k_2$ is determined from the expansion at order $\mathcal O(\eps^3)$,
	\begin{equation*}
	\begin{aligned}
	&k_*^2 \Phi_{3xx} + \Phi_{3yy} =-2k_*k_2\Phi_{1xx}-3\eta_1^2 \Phi_{1yy} + 2\eta_2 \Phi_{1yy} + 2\eta_1 \Phi_{2yy} &\quad && \\
	& \phantom{k_*^2 \Phi_{3xx} + \Phi_{3yy} = }
	+2k_*^2y(\eta_{2x}\Phi_{1xy}+\eta_{1x}\Phi_{2xy}-\eta_{1x}\eta_1\Phi_{1xy}-\eta_{1x}^2\Phi_{1y}) & && \\
	&  \phantom{k_*^2 \Phi_{3xx} + \Phi_{3yy} = }
	-k_*^2y^2\eta_{1x}^2 \Phi_{1yy} + yk_*^2(\eta_{1xx}\Phi_{2y}+\eta_{2xx}\Phi_{1y}-\eta_{1xx}\eta_1\Phi_{1y})& &\\
	\end{aligned}
	\end{equation*}
	for $0 < y < 1$, and
	\begin{equation*}
	\begin{aligned}
	&\Phi_{3y} =0 & &\text{on $y=0$},&\\
	&\Phi_{3y}+k_*\eta_{3x} = -k_*(\eta_1\eta_{2x}+\eta_{1x}\eta_2) - k_2\eta_{1x}& &&\\
	& \phantom{\Phi_{3y}+k_*\eta_{3x} = } + k_*^2(\eta_{1x}\Phi_{2x}+\eta_{2x}\Phi_{1x}+\eta_1\eta_{1x}\Phi_{1x})-k_*^2\eta_{1x}^2\Phi_{1y} & &\text{on $y=1$}, & \\
	&\alpha \eta_3 - \beta k_*^2 \eta_{3xx} - k_* \Phi_{3x} =k_2\Phi_{1x}-k_*(\eta_{1x}\Phi_{2y}+\eta_{2x}\Phi_{1y}-\eta_{1x}\eta_1 \Phi_{1y})&  &&\\
	& \phantom{\alpha \eta_3 - \beta k_*^2 \eta_{3xx} - k_* \Phi_{3x} = }
	+2\beta k_*k_2 \eta_{1xx} - \frac{3}{2}\beta k_*^4 \eta_{1xx} \eta^2_{1x}& &&\\
	&\phantom{\alpha \eta_3 - \beta k_*^2 \eta_{3xx} - k_* \Phi_{3x} = }
	-k_*^2(\Phi_{1x}\Phi_{2x}-\eta_{1x}\Phi_{1x}\Phi_{1y})-\Phi_{1y}\Phi_{2y}+\eta_1\Phi_{1y}^2& &\text{on $y=1$}.& \\
	\end{aligned}
	\end{equation*}
	The right hand sides involving only Fourier modes $1$ and $3$,  we  write
	\[\eta_3(x)=\eta_{31}\cos(x)+ \eta_{33}\cos(3x), \quad \Phi_{3}(x,y)=\phi_{31}(y)\sin(x)+\phi_{33}(y)\sin(3x),\]
	and the resulting systems for $(\eta_{31},\phi_{31})$ and  $(\eta_{33},\phi_{33})$ are decoupled. The coefficient $k_2$ only appears in the terms with Fourier mode $1$ so that it is enough to solve the equations for  $(\eta_{31},\phi_{31})$. These equations are of the form
	\begin{equation}\label{e:O3}
	\begin{aligned}
	-k_*^2\phi_{31}(y) + \phi_{31}''(y) &=F_3& \quad &\text{for $0 < y < 1$},&\\
	(\alpha + \beta k_*^2)\eta_{31}-k_*\phi_{31}(1) &=g_3,& & & \\
	\phi_{31}'(0) &=0,& && \\
	\phi_{31}'(1)-k_*\eta_{31} &=f_3,& &&
	\end{aligned}
	\end{equation}
	where, after computations, we find the explicit formulas
	\[
	\begin{split}
	F_3(y) = & \left(\frac{k_*^2\sinh^2(k_*)}{4}-\frac{k_*^4}{2\alpha}-\frac{k_*^3}4\left(c(k_*)+1\right)\sinh(2k_*)+2k_*k_2\right)\cosh(k_*y) \\
	& +\frac{k_*^3\sinh^2(k_*)}{2}y\sinh(k_*y) + k_*^3 c(k_*)\sinh(k_*)\cosh(2k_*y) \\
	&+ \frac{3k_*^4c(k_*)\sinh(k_*)}4y \sinh(2k_*y),
	\end{split}\]
	and
	\[\begin{split}
	g_3 = & \left(\frac{k_*^3}{16}-\frac{k_*^3c(k_*)}4 +k_2\right)\cosh(k_*)+ \left(-\frac{9}{32}\beta k_*^4 + 2\beta k_* k_2\right) \sinh(k_*) \\
	&-\frac{k_*^3}{16}\cosh(3k_*)+\frac{3}{32}\beta k_*^4 \sinh(3k_*),\\[0.5ex]
	f_3 = & -\frac{k_*^2c(k_*)}{16}\cosh(k_*) + \left(-\frac{k_*^3c(k_*)}4+\frac{k_*^3}{16}-\frac{k_*^3}{4\alpha}+k_2\right)\sinh(k_*)\\
	& + \frac{k_*^2c(k_*)}{16}\cosh(3k_*) +\frac{3}{16}k_*^3 \sinh(3k_*).
	\end{split}\]
	
	Observe that the system~\eqref{e:O3} is equivalent to the linear nonhomogeneous equation 
	\begin{equation}\label{e:linO3}
	L_0\begin{pmatrix} \eta_{31}\cos(x) \\ 0 \\ \phi_{31}(y)\sin(x)\\ 0 \end{pmatrix} =\begin{pmatrix} 0 \\ g_3 \cos(x) \\ 0 \\ -F_3(y)\sin(x)\end{pmatrix}.
	\end{equation}
	where $L_0$ is the linear operator from Section~\ref{s linear}, together with the linear nonhomogeneous boundary conditions 
	\begin{equation}\label{e:linO3bc}
	\phi_{31}'(0) = 0 \quad\text{and}\quad \phi_{31}'(1)-k_*\eta_{31} = f_3.
	\end{equation}
	Consider the dual vector
	\begin{equation}\label{e:zeta*}
	\zeta_+^*=\begin{pmatrix} 0 \\ \sinh(k_*)\cos(x) \\ 0 \\ \cosh(k_*y)\sin(x)\end{pmatrix},
	\end{equation}
	which belongs to the kernel of the adjoint operator $L_0^*$. Taking into account the nonhomo- geneous boundary conditions \eqref{e:linO3bc}, a direct computation of the scalar product of \eqref{e:linO3} with  $\zeta_+^*$ leads to the solvability condition
	\begin{equation}\label{e:scO3}
	f_3 \cosh(k_*) + g_3\sinh(k_*) = \int_0^1 F_3(y)\cosh(k_*y)\dif y.
	\end{equation}
	Indeed, integrating twice by parts we find 
	\[\begin{split}
	&\int_0^1 F_3(y)\cosh(k_*y)\dif y - g_3 \sinh(k_*)\\
	&\quad = \int_0^1 \left(k_*^2\phi_{31}(y)-\phi_{31}''(y)\right) \cosh(k_*y) \dif y - g_3\sinh(k_*)\\
	&\quad = \phi_{31}'(1)\cosh(k_*)-k_*\phi_{31}(1)\sinh(k_*) - g_3\sinh(k_*)\\
	&\quad = (f_3+k_*\eta_{31})\cosh(k_*) - (\alpha + \beta k_*^2)\eta_{31}\sinh(k_*) \\
	&\quad = f_3 \cosh(k_*),
	\end{split}\]
	where we have also used  the linear dispersion relation $\mathcal D(k_*)=0$. 
	Replacing the explicit formulas for $F_3$, $f_3$, and $g_3$ into the solvability condition \eqref{e:scO3} and solving for $k_2$  we obtain the formula~\eqref{e:k2}.

	\section{Computation of the coefficient $\boldsymbol{m_{21}^{(2)}}$}
	\label{app-m212}
	
	We prove the equality \eqref{eq-m212} which connects the coefficient ${m_{21}^{(2)}}$ with the coefficient $k_2$ in the expansion of the wavenumber $k_\eps$ of the periodic wave.
	
	We emphasize the dependence on $k$ of the vector field $F$ in \eqref{eq-hydro-evo} by writing $F(U,k)$ and similarly for $B$ in the boundary conditions \eqref{eq-hydro-bc-evo} we write $B(U,k)$. Setting
	\[
	\widetilde B(U,k)=B(U,k) - \Phi_{y},
	\]
	the two-dimensional periodic wave $U_\eps$ given in \eqref{e:Ueps0} satisfies
	\begin{equation}
	\begin{cases}\label{eq-hydro-syst}F(U_\eps, k_\eps) = 0, \quad y\in (0,1) \\
	\widetilde{B}(U_\eps,k_\eps) = 0, \quad y = 0,1.
	\end{cases}
	\end{equation}
	The linear operator $L_\eps$ is equal to ${\D}_U F[U_\eps, k_\eps]$ with boundary conditions ${\D}_U \widetilde{B}[U_\eps, k_\eps]= 0.$ To determine $m_{21}^{(2)}$, we study the problem
	\begin{equation}\label{eq-spectral}L_\eps \zeta_+(\eps) = m_{11}(\eps)\zeta_+(\eps)+m_{21}(\eps)\psi_+(\eps),\end{equation}
	with the boundary conditions
	\begin{equation}\label{eq-spectral-bc}{\D}_U\widetilde{B}[U_\eps, k_\eps]\zeta_+(\eps) = 0.\end{equation}
	We will make a connection between~\eqref{eq-hydro-syst} and~\eqref{eq-spectral}--\eqref{eq-spectral-bc} using the expansions in $\eps$ of $F, \widetilde B,$ $U_\eps$, $k_\eps$, $\zeta_+(\eps)$, $\psi_+(\eps)$, $m_{11}(\eps)$ and $m_{21}(\eps)$.
	
	For the system~\eqref{eq-hydro-syst}, we write
	\[
	\begin{split}
	&U_\eps = \eps U_1 + \eps^2 U_2 + \eps^3U_3 + \mathcal O(\eps^4),\\
	&k_\eps = k_* + \eps^2 k_2 + \mathcal O(\eps^4),\\
	\end{split}
	\]
	and  take the Taylor expansion of $F(U,k)$ at $(0,k_*)$,
	\[
	\begin{split}
	F(U, k) &= {\D}_U F[0, k_*] U + \frac{1}{2}{\D}_{UU}^2F[0,k_*](U,U) \\
	& \quad + {\D}_{Uk}^2F[0,k_*](U, k-k_*) + \frac{1}{6}{\D}_{UUU}^3F[0,k_*](U,U,U) \\
	& \quad +  \mathcal O\left(|k-k_*|^2\|U\|_{\mathcal X^1}+|k-k_*|\|U\|_{\mathcal X^1}^2 + \|U\|_{\mathcal X^1}^4\right),
	\end{split}
	\]
	where all derivatives ${\D}^{(q)}_kF[0,k_*] = 0$ \color{black} because $F(0,k)=0$ for all $k$. A similar Taylor expansion can be written for the nonlinear boundary condition ${\widetilde{B}}$. In particular $L_0 = {\D}_UF[0,k_*]$ with boundary conditions $\widetilde{B}_0 U = {\D}_U\widetilde{B}[0,k_*]U = 0$ on $y=0,1$.
	Inserting these expansions into \eqref{eq-hydro-syst}, at order $\O(\eps)$, we find
	\begin{equation}\label{eq-hydro-order-eps}\begin{cases}
	L_0U_1 =0, \quad y\in (0,1),\\
	\widetilde{B}_0U_1 =0, \quad y=0,1.
	\end{cases}\end{equation}
	Thus $U_1$ belongs to the kernel of $L_0$ and we choose $U_1=\zeta_+$ which is in agreement with the expansion of $U_\eps$ given by \eqref{eq-2Dsol-exp}. At order $\O(\eps^2)$, we obtain the system
	\begin{equation}\label{eq-hydro-order-eps2}\begin{cases}L_0 U_2 = - \frac{1}{2}{\D}_{UU}^2F[0,k_*](U_1,U_1), \quad y\in (0,1),\\
	\widetilde{B}_0U_2 = -\frac{1}{2}{\D}_{UU}^2\widetilde{B}[0,k_*](U_1,U_1), \quad y = 0,1.
	\end{cases}\end{equation}
	and at order $\O(\eps^3)$ we find
	\begin{equation}\label{eq-hydro-order-eps3}
	\begin{cases}
	L_0U_3 = -{\D}_{UU}^2F[0,k_*](U_1,U_2)-{\D}_{Uk}^2F[0,k_*](U_1,k_2) \\
	\phantom{L_0U_3 =} - \frac{1}{6}{\D}_{UUU}^3F[0,k_*](U_1,U_1,U_1), \qquad \qquad y\in(0,1),\\
	\widetilde{B}_0U_3 = -{\D}_{UU}^2\widetilde{B}[0,k_*](U_1,U_2)-{\D}_{Uk}^2\widetilde{B}[0,k_*](U_1,k_2) \\
	\phantom{\widetilde{B}_0U_3 =} - \frac{1}{6}{\D}_{UUU}^3\widetilde{B}[0,k_*](U_1,U_1,U_1), \qquad \qquad y=0,1.
	\end{cases}
	\end{equation}
	The nonhomogeneous systems~\eqref{eq-hydro-order-eps2} and~\eqref{eq-hydro-order-eps3} have each a unique solution $ U_2$ and $U_3$, respectively, up to an element in the kernel of $L_0$. Furthermore, $U_2=(\eta_2,0,\Phi_2,0)^{\text{T}}$ in agreement with the expansion of $U_\eps$ given by \eqref{eq-2Dsol-exp}. 
	
	Similarly, for the spectral problem~\eqref{eq-spectral}--\eqref{eq-spectral-bc} we take
	\[
	\begin{split}
	&\zeta_+(\eps)\, = \zeta_+ \, \,+ \eps \zeta_{+,1} \, + \eps^2 \zeta_{+,2} \, + \O(\eps^3),\\
	&\psi_+(\eps) = \psi_+ + \eps\psi_{+,1} + \eps^2 \psi_{+,2} + \O(\eps^3),\\
	&m_{11}(\eps) = \eps^2 m_{11}^{(2)} + \O(\eps^4),\\
	&m_{21}(\eps) = \eps^2 m_{21}^{(2)} + \O(\eps^4),
	\end{split}
	\]
	the Taylor expansion
	\[
	\begin{split}
	L_\eps &= {\D}_UF[U_\eps, k_\eps](\, \cdot\,) \\
	&= {\D}_U F[0,k_*](\, \cdot \,) + {\D}_{UU}^2F[0,k_*](U_\eps, \, \cdot \,) + {\D}_{Uk}^2F[0,k_*](\, \cdot \,, k_\eps-k_*) \\
	& \quad + \frac{1}{2}{\D}_{UUU}^3F[0,k_*](U_\eps, U_\eps, \, \cdot \,) +  \mathcal O\Big(|k_\eps-k_*|^2+|k_\eps-k_*|\|U_\eps\|_{\mathcal X^1} + \|U_\eps\|_{\mathcal X^1}^3\Big),
	\end{split}
	\]
	and a similar Taylor expansion can be written for ${\D}_U\widetilde{B}[U_\eps,k_\eps]$. Inserting these expansions in equations~\eqref{eq-spectral}--\eqref{eq-spectral-bc}, we find at order $\O(1)$ the system 
	\[\begin{cases}
	L_0\zeta_+ = 0, \quad y \in (0,1),\\
	\widetilde{B}_0\zeta_+ = 0, \quad y = 0,1.
	\end{cases}\]
	This is precisely the system~\eqref{eq-hydro-order-eps} and we may choose $U_1=\zeta_+$. Next, at order $\O(\eps)$, we find
	\begin{equation*}
	\begin{cases}
	L_0 \zeta_{+,1} = -{\D}_{UU}^2F[0,k_*](U_1, U_1), \quad y \in (0,1),\\
	\widetilde{B}_0 \zeta_{+,1} = -{\D}_{UU}^2\widetilde{B}[0,k_*](U_1, U_1), \quad y = 0,1.
	\end{cases}
	\end{equation*}
	Comparing to~\eqref{eq-hydro-order-eps2}, we choose $\zeta_{+,1} = 2U_2$. Finally, at order $\O(\eps^2)$, we have the system
	\begin{equation*}
	\begin{cases}
	L_0 \zeta_{+,2} = -3{\D}_{UU}^2F[0,k_*](U_1, U_2) - {\D}_{Uk}^2F[0,k_*](U_1, k_2)\\
	\phantom{L_0 \zeta_{+,2} = }- \frac{1}{2}{\D}_{UUU}^3F[0,k_*](U_1,U_1,U_1) + m_{11}^{(2)}\zeta_+ + m_{21}^{(2)}\psi_+, \quad y \in (0,1),\\
	\widetilde{B}_0 \zeta_{+,2} = -3{\D}_{UU}^2\widetilde{B}[0,k_*](U_1, U_2) - {\D}_{Uk}^2\widetilde{B}[0,k_*](U_1, k_2)\\
	\phantom{\widetilde{B}_0 \zeta_{+,2} = }- \frac{1}{2}{\D}_{UUU}^3\widetilde{B}[0,k_*](U_1,U_1,U_1), \hfill  y = 0,1.\phantom{()}
	\end{cases}
	\end{equation*}
	Substracting the system~\eqref{eq-hydro-order-eps3} from the one above we obtain the system
	\begin{equation}\label{e:k2m21}
	\begin{cases}
	L_0 (\zeta_{+,2}-3U_3) = 2 {\D}_{Uk}^2F[0,k_*](U_1, k_2)+ m_{11}^{(2)}\zeta_+ + m_{21}^{(2)}\psi_+, \quad y \in (0,1),\\
	\widetilde{B}_0 (\zeta_{+,2}-3U_3) = 2 {\D}_{Uk}^2\widetilde{B}[0,k_*](U_1, k_2), \hspace{3.68cm} y = 0,1,\phantom{()}
	\end{cases}
	\end{equation}
	in which $k_2$ and $ m_{21}^{(2)}$ appear in the right hand sides of the two equalities.
	
	We obtain the connection between  $k_2$ and $ m_{21}^{(2)}$ by taking the scalar product of the first equation in \eqref{e:k2m21} with the dual vector $\zeta_+^*$ given by \eqref{e:zeta*} which belongs to the kernel of the adjoint operator $L_0^*$. This leads to the equality 
	\begin{equation}\label{eq-m212-pre}
	m_{21}^{(2)} = \frac{\langle L_0(\zeta_{+,2}-3U_3),\zeta_+^*\rangle}{\langle \psi_+, \zeta_+^*\rangle}-2\frac{\langle {\D}_{Uk}^2F[0,k_*](U_1,k_2), \zeta_+^*\rangle}{\langle \psi_+,\zeta_+^*\rangle}.
	\end{equation}
	It remains to explicitly compute the right hand side of this equality.
	
	First, we compute the denominator
	\[\begin{split}\langle \psi_+, \zeta_+^* \rangle &= \int_0^{2\pi} \int_0^1 \cosh^2(k_*y)\sin^2(x)\dif y \dif x + \int_0^{2\pi} \beta \sinh^2(k_*)\cos^2(x)  \dif x \\
	&= \pi \left(\frac{1}{2} + \frac{\cosh(k_*)\sinh(k_*)}{2k_*} + \beta \sinh^2(k_*)\right).
	\end{split}\]
	Next, observe that $\zeta_{+,2}=(\eta_{+,2},0, \Phi_{+,2},0)^{\TT}$, and similarly the second and fourth components of $U_3$ vanish. Setting
	\[
	\zeta_{+,2}-3U_3 = (\eta, 0, \Phi, 0)^{\TT},
	\]
	and taking into account the boundary conditions from  \eqref{e:k2m21}, we find
	\[\begin{split}
	\langle L_0(\zeta_{+,2}-3U_3), \zeta_+^* \rangle 
	&=  \int_0^1 \int_0^{2\pi} \left(-k_*^2\Phi_{xx}-\Phi_{yy}\right)\cosh(k_*y)\sin(x) \dif x \dif y \\
	& \quad  + \int_0^{2\pi} \left(\alpha \eta -\beta k_*^2 \eta_{xx} - k_*\Phi_x\big|_{y=1}\right) \sinh(k_*)\cos(x) \dif x\\
	&= -2\int_0^{2\pi} k_2 \eta_{1x} \cosh(k_*)\sin(x)\dif x \\
	&= 2k_2\int_0^{2\pi} \sinh(k_*)\cosh(k_*)\sin^2(x)\dif x\\
	&=\pi k_2 \sinh(2k_*).
	\end{split}\]
	where we have integrated twice by parts and used the linear dispersion relation $\mathcal D(k_*)=0$. 
	Finally,
	\[{\D}_{Uk}^2F[0,k_*](U_1, k_2) = k_2 \begin{pmatrix} 0 \\ -2\beta k_* \eta_{1xx} - \Phi_{1x}|_{y=1} \\ 0 \\ -2k_*\Phi_{1xx}\end{pmatrix},\]
	which gives 
	\[\langle {\D}_{Uk}^2F[0,k_*](U_1, k_2), \zeta_+^*\rangle = \pi k_2 \left(2\beta k_* \sinh^2(k_*)+ k_*\right).\]
	Replacing these explicit formulas into \eqref{eq-m212-pre} gives the equality~\eqref{eq-m212}.
	
	\section{Sign of the coefficient $\boldsymbol{m_{21}^{(2)}}$}
	\label{app-m212-sign}

	Consider $\widetilde{m}_{21}^{(2)}$ given by~\eqref{eq-coeff-m212} and assume $k_*>0$ is such that $\mathcal D(k_*)=0$. Let
	\[\sigma = \tanh(k_*) \quad \text{and} \quad \widetilde{T} = \frac{\beta}{\alpha} k_*^2.\]
	Note that $\sigma \in (0, 1)$ and $\widetilde{T} > 0$. A straightforward but lengthy calculation shows that 
	\begin{equation}
	\label{eq-m212-formula}
	\begin{split}\widetilde{m}_{21}^{(2)}&=\frac{8k_*}{(1+\widetilde{T})(1-\sigma^2)^2} \Bigg\{\frac{(1-\sigma^2)(9-\sigma^2)+\widetilde{T}(3-\sigma^2)(7-\sigma^2)}{\sigma^2-\widetilde{T}(3-\sigma^2)}\\
	& \hspace{5cm} \, \, \, +8\sigma^2 - 2(1-\sigma^2)^2(1+\widetilde{T})-\frac{3\sigma^2 \widetilde{T}}{1+\widetilde{T}}\Bigg\}\\
	&= \frac{8k_*}{(1+\widetilde{T})^2 (1-\sigma^2)^2} \cdot \frac{1}{\sigma^2-\widetilde{T}(3-\sigma^2)} \cdot \left(a_3 \widetilde{T}^3 + a_2\widetilde{T}^2 + a_1\widetilde{T} + a_0\right),\end{split}
	\end{equation}
	in which 
	\[\begin{aligned}
	a_3 &= -2\sigma^6+10\sigma^4-14\sigma^2+6, \quad &a_2 &=-6\sigma^6+30\sigma^4-55\sigma^2+33,\\
	a_1 &=-6\sigma^6+33\sigma^4-62\sigma^2+36, \quad &a_0 &= -2\sigma^6+13\sigma^4-12\sigma^2+9.
	\end{aligned}\]
	The coefficients $a_i$ are all positive for $\sigma \in (0, 1)$ and $i=0,1,2,3$. It is now clear that all factors in the above formula for $\widetilde{m}_{21}^{(2)}$ are positive, except for $(\sigma^2-\widetilde{T}(3-\sigma^2))^{-1}$, and the sign of $\widetilde{m}_{21}^{(2)}$ is precisely the sign of $\sigma^2 - \widetilde{T}(3-\sigma^2).$ We have
	\[\text{sign}(\sigma^2-\widetilde{T}(3-\sigma^2))=-\text{sign}(\mathcal D(2k_*)).\]
	In Region I, the linear dispersion relation $\mathcal D(k)=0$ has a unique positive simple root $k=k_*$. It follows that the smooth function $k\mapsto \mathcal D(2k)$ changes signs exactly once at $k=k_*/2$. Since $\mathcal D(0) =0$ and $\mathcal D'(0) < 0$, we conclude that $\mathcal D(2k)$ is negative for $k \in (0, k_*/2)$, and positive for $k \in (k_*/2, \infty)$. In particular, we have $\mathcal D(2k_*) > 0$. In Region II, the linear dispersion relation $\mathcal D(k)=0$ has two positive simple roots $k=k_{*,1}$ and $k=k_{*,2}$, $0 < k_{*,1} < k_{*,2}$. This means that the smooth function $\mathcal D(2k)$ changes signs exactly twice, first at $k=k_{*,1}/2$ and then at $k=k_{*,2}/2$. Since $\mathcal D(0) = 0$ and $\mathcal D'(0)>0$, we conclude that $\mathcal D(2k)$ is positive on $(0, k_{*,1}/2)$, negative on $(k_{*,1}/2, k_{*,2}/2)$ and positive on $(k_{*,2}/2, \infty)$. This implies that $\mathcal D(2k_{*,2})>0$, whereas 
	\[\begin{split}
	&\mathcal D(2k_{*,1}) > 0 \quad \text{if} \quad k_{*,1}>k_{*,2}/2,\\
	&\mathcal D(2k_{*,1})<0 \quad \text{if} \quad k_{*,1}<k_{*,2}/2.
	\end{split}\]
	The parameter region in which we have $\mathcal D(2k_{*,1}) > 0$ is precisely the open region between the curve $\Gamma$ and $\Gamma_2$. We summarize our findings below.
	\begin{prop} 
		\label{prop-coeff-sign}
		If $(\alpha,\beta)$ belongs to Region I, the coefficient $m_{21}^{(2)}$ is negative. If $(\alpha,\beta)$ belongs to Region II, then
		\begin{itemize}
			\item $m_{21}^{(2)}$ is negative for both wavenumbers $k_{*,1}$ and $k_{*,2}$ if $(\alpha, \beta)$ lies in the open region between $\Gamma$ and $\Gamma_2$,
			\item $m_{21}^{(2)}$ is positive for $k_{*,1}$ and negative for $k_{*,2}$ if $(\alpha, \beta)$ lies to the left of $\,\Gamma_2$.
		\end{itemize}  
	\end{prop}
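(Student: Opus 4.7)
The proposition reduces to analyzing the sign of the explicit expression $\widetilde m_{21}^{(2)}$ given in \eqref{eq-coeff-m212}, which by construction has the same sign as $m_{21}^{(2)}$. Because the dispersion relation $\mathcal D(k_*)=0$ provides one constraint between the three quantities $\alpha$, $\beta$, $k_*$, my first move would be to reparametrize via the dimensionless variables $\sigma=\tanh(k_*)\in(0,1)$ and $\widetilde T=\beta k_*^2/\alpha>0$. Solving $\mathcal D(k_*)=0$ for $\alpha$ and substituting this, the explicit formula for $c(k_*)$ from \eqref{e:ck*}, and the double- and quadruple-angle identities for $\sinh$ and $\cosh$, should rewrite $\widetilde m_{21}^{(2)}$ as a rational function of the two variables $(\sigma,\widetilde T)$ only.

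The plan is then to factor this rational expression into pieces whose signs I can control individually. After the simplification I expect a structure of the form $\widetilde m_{21}^{(2)}=P(\sigma,\widetilde T)\cdot Q(\sigma,\widetilde T)/(\sigma^2-\widetilde T(3-\sigma^2))$ with $P$ manifestly positive on $(0,1)\times(0,\infty)$ and $Q$ a low-degree polynomial in $\widetilde T$ whose coefficients are polynomials in $\sigma^2$. The task is then to verify $Q>0$ on this domain; I would do this by examining the coefficients one by one and checking they are strictly positive for $\sigma\in(0,1)$, which should be elementary since each coefficient is expected to be a cubic in $\sigma^2$ with moderate integer coefficients. Granted this, the sign of $\widetilde m_{21}^{(2)}$ reduces entirely to that of the single factor $\sigma^2-\widetilde T(3-\sigma^2)$.

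The last step is to identify $\sigma^2-\widetilde T(3-\sigma^2)$ with a positive multiple of $-\mathcal D(2k_*)$ via $\sigma=\tanh(k_*)$ and the hyperbolic double-angle identities. This converts the question into a qualitative analysis of where $k\mapsto\mathcal D(2k)$ is positive, which requires only the locations of the positive zeros of $\mathcal D$ together with $\mathcal D(0)=0$ and the sign of $\mathcal D'(0)$. In Region~I there is a unique root $k_*$ and $\mathcal D'(0)<0$, which forces $\mathcal D(2k_*)>0$ and hence $m_{21}^{(2)}<0$. In Region~II the two roots $k_{*,1}<k_{*,2}$ and $\mathcal D'(0)>0$ give $\mathcal D(2k_{*,2})>0$, while $\mathcal D(2k_{*,1})$ is positive precisely when $2k_{*,1}>k_{*,2}$, that is, when $(\alpha,\beta)$ lies in the open region between $\Gamma$ and $\Gamma_2$; to the left of $\Gamma_2$ the sign is reversed and $m_{21}^{(2)}$ is positive for the lower wavenumber.

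The main obstacle I anticipate is the algebraic simplification leading to the factored form in the second paragraph. The naive substitution produces a lengthy rational expression, and obtaining the compact denominator $\sigma^2-\widetilde T(3-\sigma^2)$ requires judicious use of the dispersion relation at several intermediate steps. Verifying positivity of every coefficient of $Q$ is equally essential, since the whole argument collapses if one of them could vanish on $(0,1)$; I would therefore cross-check these inequalities at the endpoints $\sigma\to 0$ and $\sigma\to 1$ and, if any one comes close to vanishing, examine its derivative in $\sigma^2$ to confirm monotonicity.
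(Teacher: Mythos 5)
Your proposal follows essentially the same route as the paper's own proof: the change of variables $\sigma=\tanh(k_*)$, $\widetilde T=\beta k_*^2/\alpha$, the factorization of $\widetilde m_{21}^{(2)}$ into a manifestly positive prefactor times a cubic in $\widetilde T$ with positive coefficients over the single sign-determining factor $\sigma^2-\widetilde T(3-\sigma^2)$, the identification of that factor's sign with $-\operatorname{sign}\mathcal D(2k_*)$, and the sign analysis of $k\mapsto\mathcal D(2k)$ via $\mathcal D'(0^+)=\alpha-1$ and the simple roots. The argument is correct and matches the paper in all essentials.
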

	
	\section{Formal derivation of instability criterion}	
	\label{app-formal-derivation}
	
	We demonstrate how our instability criterion can formally be derived based on the Davey-Stewartson model. This is most easily expressed in the original physical frame of reference and dimensional variables\footnote{In this appendix, the wavenumbers $k$ and $k_*$ are dimensional and thus $kh$ and $k_*h$ correspond to the wavenumbers used elsewhere in this paper.}.
	Making a multiple-scales Ansatz
	\begin{align*}
	\phi&=\eps \frac{\sqrt{gk}}{k^2} A(\eps kz, \eps^2 \sqrt{gk} t)\frac{\cosh(k(y+h))}{\cosh(kh)} e^{\ii (k x-\omega t)} + \text{c.c.} + \cdots,\\
	\eta&= \eps \frac{\ii\omega \sqrt{gk}}{k^2(g+k^2T/\rho)} A(\eps kz, \eps^2 \sqrt{gk} t)e^{\ii (kx-\omega t)} + \text{c.c.} + \cdots,
	\end{align*}
	one obtains at cubic order in $\eps$ an evolution equation of NLS type in the stretched transverse direction $\zeta=\eps kz$, 
	\[
	\ii\frac{\partial A}{\partial \tau}+\mu \frac{\partial^2 A}{\partial \zeta^2}=\chi |A|^2 A.
	\]
	In this approximation, the families of two-dimensional periodic waves which we study can be represented by the $\zeta$-independent solution $A=\exp(-\ii  \chi \tau)$, and a classical result says that it is unstable precisely when  $\mu \chi<0$. 
	According to Ablowitz \& Segur \cite{AblowitzSegur79} (see also Djordjevic \& Redekopp \cite{DR}) $\mu$ is always positive, while $\chi$ is given by\footnote{Here we correct a minor misprint in \cite{AblowitzSegur79}, namely the factor $2-\sigma^2$ in the numerator of the first fraction in their formula (2.24d) should be $3-\sigma^2$.}
	\[
	\chi=\frac{1}{4\sqrt{\sigma(1+\tilde T)}} \left(\frac{(1-\sigma^2)(9-\sigma^2)+\tilde T(3-\sigma^2)(7-\sigma^2)}{\sigma^2-\tilde T(3-\sigma^2)}
+8\sigma^2-2(1-\sigma^2)^2(1+\tilde T)-\frac{3\sigma^2 \tilde T}{1+\tilde T}
\right),
	\]
	in which $\tilde T=k^2 T/(g\rho)$ and $\sigma=\tanh(kh)$. Comparing with \eqref{eq-m212-formula}, we find that
	\[
	\widetilde m_{21}^{(2)}=\frac{32 k_* h  \sqrt{\sigma(1+\tilde T)}}{(1+\tilde T)(1-\sigma^2)^2} \chi.
	\]
	Hence $\widetilde m_{21}^{(2)}$ and $\chi$ have the same sign and give rise to the same instability criterion.
	The fact that the sign changes precisely along the second harmonic resonance curve $\Gamma_2$, where $k_{*,2}=2k_{*, 1}$, agrees with Figure 1 of \cite{AblowitzSegur79}.

	\newpage
	\bibliographystyle{siam}
	\bibliography{transverse_insta}
	
	\Addresses
	
\end{document}